\definecolor{lavender}{rgb}{0.4,0,1}
\crefname{conjecture}{Conjecture}{Conjectures}
\newtheorem{theorem}{Theorem}[section]
\newtheorem{proposition}[theorem]{Proposition}
\newtheorem{corollary}[theorem]{Corollary}
\newtheorem{lemma}[theorem]{Lemma}
\theoremstyle{definition}
\newtheorem{remark}[theorem]{Remark}
\newcommand{\dd}{\boldsymbol{D}}
\newcommand{\rr}{\boldsymbol{r}}
\newcommand{\dfn}[1]{\textcolor{blue}{\emph{#1}}}
\newcommand{\clock}[1]{\protect\tikz [baseline=0pt] {
                  \protect\fill (0ex,.5ex) circle (.3ex);
                  \protect\draw[line width=.1ex] (-0ex, -.5ex)--(-.25ex, .5ex);
                   \protect\draw[line width=.1ex] (.25ex, .5ex)--(0ex, 1.5ex);} \kern .3ex #1}
\newcommand{\counter}[1]{\protect\tikz [baseline=0pt] {
                 \protect\fill (0ex,.5ex) circle (.3ex);
                  \protect\draw[line width=.1ex] (0ex, 1.5ex)--(-.25ex, .5ex);
                   \protect\draw[line width=.1ex] (.25ex, .5ex)--(0ex, -.5ex);}\kern .3ex #1}
\begin{document}

\title[]{Mixing on Generalized Associahedra}
\subjclass[2010]{}

\author[]{William Chang}
\address[]{Department of Applied Mathematics, University of California, Los Angeles, USA}
\email{chang314@g.ucla.edu}

\author[]{Colin Defant}
\address[]{Department of Mathematics, Harvard University, Cambridge, MA 02138, USA}
\email{colindefant@gmail.com}

\author[]{Daniel Frishberg}
\address[]{Department of Computer Science and Software Engineering, California Polytechnic State University, San Luis Obispo, CA, USA}
\email{dfrishbe@calpoly.edu}

\begin{abstract}
Eppstein and Frishberg recently proved that the mixing time for the simple random walk on the $1$-skeleton of the associahedron is $O(n^3\log^3 n)$. We obtain similar rapid mixing results for the simple random walks on the $1$-skeleta of the type-$B$ and type-$D$ associahedra. We adapt Eppstein and Frishberg's technique to obtain the same bound of $O(n^3\log^3 n)$ in type $B$ and a bound of $O(n^{13} \log^2 n)$ in type $D$; in the process, we  establish an expansion bound that is tight up to logarithmic factors in type $B$. 
\end{abstract} 

\maketitle

\section{Introduction}\label{sec:intro}

\subsection{Monte Carlo sampling algorithms}
\label{sec:mcmc}
\emph{Markov chain Monte Carlo} (\emph{MCMC}) algorithms are widely studied for a broad class of sampling problems in which one wishes to sample from a distribution over a large state space. Well-motivated, classic sampling problems include the \emph{Ising} and \emph{Potts} models of magnetism and the \emph{hardcore} model of gases. For a survey of Markov chain mixing times, including discussion of classic problems, see~\cite{lpwbook}. These problems are closely related to  the combinatorial problems of sampling and counting $k$-colorings and independent sets in graphs. Each of these problems involves sampling from a distribution over a state space\textemdash such as the collection of all $k$-colorings or independent sets of an input graph~$G$. These state spaces are typically exponentially large with respect to the input graph. MCMC algorithms\textemdash in which one begins with a non-random state and performs a random walk through the state space\textemdash comprise a simple and natural family of algorithms for these sampling problems.

A central question in the analysis of MCMC sampling algorithms is their convergence rate, or \emph{mixing time}. Although the state space is typically exponential in size, one typically wishes to find a polynomial-time (in terms of $n$, the number of vertices of the input graph) algorithm for sampling. More precisely, MCMC algorithms can be run until an arbitrary stopping point $T$. As $T\rightarrow \infty$, these algorithms can be shown to converge to an exact sample\textemdash but one wishes to bound the time before approximate convergence (see \cref{sec:setup} for precise definitions), within a desired constant tolerance threshold, to the target distribution (called the \emph{stationary} distribution). 

The analysis of mixing times dates back decades~\cite{lpwbook}. A number of papers over the past several years~\cite{logconcaveii,  evoptimal, anarihardcore, chen2021rapid, clv} have established polynomial and in some cases optimal mixing time bounds for a number of MCMC sampling algorithms, using recently developed algebraic machinery~\cite{ko18, alevlau} relying on spectral graph theory and abstract simplicial complexes. See the recent lecture notes by \v{S}tefankovi\v{c} and Vigoda~\cite{vigstef} for an introduction to these techniques.

A classic problem of direct interest for the present paper concerns sampling \emph{triangulations} of a convex polygon. A triangulation is a maximal set of mutually non-crossing \emph{diagonals}\textemdash non-polygon edges whose endpoints are vertices of the polygon. One can define a move known as a \emph{flip}: one chooses a diagonal in the polygon, removes the diagonal, and replaces it with the (unique) other diagonal that avoids creating crossings. The flip move gives rise to a natural Markov chain\textemdash the \emph{flip walk}\textemdash for sampling a uniform $n$-gon triangulation, in which one repeatedly flips a random diagonal until the chain has approximately converged to a uniform sample.

The state space of this Markov chain is known as the \emph{associahedron}. This chain and other triangulation \emph{flip graphs}~\cite{eppflip, randlattice, Stauffer2017} have been studied from a geometric and combinatorial perspective~\cite{eppflip}. Furthermore, the triangulations of an $n$-gon are in bijection with the full binary trees consisting of $n-2$ internal nodes\textemdash and it is easy to show that the flips correspond to the left and right rotations (defined naturally as in a binary search tree) at nodes in a tree. That is, the flip walk on triangulations is isomorphic to the natural random rotation walk on binary trees. For this reason, an important motivating application for the triangulation walk arises in phylogenetic tree reconstruction in bioinformatics~\cite{aldousclad, aldousconj}.

The triangulation walk has so far resisted breakthroughs using the newer algebraic machinery but has seen a handful of results establishing polynomial mixing time bounds. Molloy, Reed, and Steiger~\cite{molloylb} gave the first polynomial upper bound on the mixing time; they used a combinatorial \emph{conductance} argument to show that the mixing time is $O(n^{25})$. They also gave a lower bound of $\Omega(n^{3/2})$ via direct observations about the random walk. McShine and Tetali subsequently improved the upper bound to $O(n^5 \log n)$ in a 1997 paper~\cite{mct} using a framework for comparing Markov chain mixing times, adapting work by Diaconis and Saloff-Coste~\cite{comparison}, as well as Randall and Tetali~\cite{randalltetali}. No new results were established for this problem until Eppstein and Frishberg~\cite{eppstein2022improved} established a bound of $O(n^3 \log^3 n)$ by developing a graph-theoretic decomposition framework\textemdash adapting a prior algebraic decomposition framework by Jerrum, Son, Tetali, and Vigoda~\cite{jstv}.

\subsection{Generalized Associahedra}
In this paper, we study two natural variations on the triangulation flip walk that initially arose in the study of cluster algebras. 

The $n$-dimensional \dfn{associahedron} is a famous polytope whose face lattice is isomorphic to the set of partial triangulations of a regular $(n+3)$-gon ordered by containment of diagonals. Technically speaking, there are several different geometric realizations of the associahedron; they are distinct as polytopes, but they are combinatorially equivalent. We will only concern ourselves with the $1$-skeleton of the $n$-dimensional associahedron, which we denote by $\mathfrak a_{n}$. The vertices of $\mathfrak a_{n}$ correspond to (maximal) triangulations of a regular $(n+3)$-gon, and the edges correspond to triangulation \emph{flips}. This graph is isomorphic to the Hasse diagram of the $(n+1)$-th \emph{Tamari lattice} (in fact, it is isomorphic to the Hasse diagram of any \emph{Cambrian lattice} of type $A_{n}$). It is also the exchange graph of a cluster algebra of type $A_{n}$. 

Roughly 25 years ago, Fomin and Zelevinsky introduced the theory of cluster algebras. Roughly speaking, a \emph{cluster algebra} is an algebra generated by variables that are generated through a dynamical combinatorial procedure known as \emph{mutation}. Some of the most important cluster algebras are those of \emph{finite type}, which are classified by Dynkin diagrams of finite root systems. The \emph{exchange graph} of a finite-type cluster algebra is a finite graph whose edges correspond to mutations. Fomin and Zelevinsky used their theory to generalize associahedra to finite Weyl groups \cite{FominZelevinsky2003}, and Reading later generalized them even further to arbitrary finite Coxeter groups \cite{Reading2006Cambrian}. In this article, we focus on the classical types $A$, $B$, and $D$. The 1-skeleta of the generalized associahedra of types $A_n$, $B_n$ and $D_n$, which we denote by $\mathfrak{a}_n$, $\mathfrak{b}_n$ and $\mathfrak{d}_n$, are the exchange graphs of the cluster algebras of types $A_n$, $B_n$ and $D_n$. As we discuss below, these exchange graphs can also be interpreted combinatorially as flip graphs for certain types of triangulations. 

The famous mixing time problem of sampling triangulations via the flip walk concerns the (classical) associahedra.
The goal of this article is to establish estimates similar to those of Eppstein and Frishberg~\cite{eppstein2022improved} for the mixing times of the simple random walks on the 1-skeleta of generalized associahedra of types $B$ and $D$, which are the only infinite families of classical types other than type~$A$. These random walks can also be interpreted in terms of finite-type cluster algebras; we simply mutate a random vertex of a quiver at each step of the process.

\subsection{Main results}
One can obtain upper bounds on the mixing time of the simple random walk on a graph $G$ from a lower bound on the expansion of $G$. Thus, we prove the following results concerning the expansion and mixing time. 

\begin{theorem}\label{thm:typeB_lower}
The expansion of $\mathfrak b_n$ is  
\[\Omega\left(\frac{1}{\sqrt{n}\log n}\right).\]
The mixing time of the simple random walk on $\mathfrak b_n$ is 
\[O\left(n^3\log^3 n \right).
    \]
\end{theorem}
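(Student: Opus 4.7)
The plan is to adapt the graph-theoretic decomposition framework of Eppstein and Frishberg~\cite{eppstein2022improved} from type $A$ to type $B$, using the structural relationship between centrally symmetric triangulations of a $(2n+2)$-gon and ordinary triangulations of an $(n+2)$-gon. Recall that $V(\mathfrak{b}_n)$ is naturally identified with the set of centrally symmetric triangulations of a labeled regular $(2n+2)$-gon, where edges correspond to centrally symmetric flips: either a flip of the central diameter, or the simultaneous flip of a centrally symmetric pair of non-diameter diagonals.

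The starting observation is that every centrally symmetric triangulation $T$ contains a unique diameter $\delta(T)$ --- namely, the diagonal shared by the two centrally symmetric triangles that meet at the polygon's center --- which yields a natural projection $\delta : V(\mathfrak{b}_n) \to D$ onto the set $D$ of $n+1$ diameters. Each fiber $V_d := \delta^{-1}(d)$ has exactly $C_n = \frac{1}{n+1}\binom{2n}{n}$ elements and is in bijection with the triangulations of the $(n+2)$-gon cut out on one side of $d$ (the triangulation of the other side being determined by central symmetry). Moreover, the subgraph of $\mathfrak{b}_n$ induced on each $V_d$ is isomorphic to $\mathfrak{a}_{n-1}$, because any non-diameter centrally symmetric flip is determined by its restriction to one half of the polygon.

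With this decomposition in hand, the plan is to invoke the Markov chain decomposition theorem of Jerrum, Son, Tetali, and Vigoda~\cite{jstv} in the graph-theoretic form used in~\cite{eppstein2022improved}, bounding the expansion of $\mathfrak{b}_n$ below in terms of the expansion of each restriction chain on a fiber $V_d$ and the expansion of the projection chain on $D$. Each restriction chain is a copy of the type-$A$ chain on $\mathfrak{a}_{n-1}$, for which the requisite expansion estimate is supplied directly by Eppstein and Frishberg. The projection chain is a weighted random walk on the cyclic set $D$ with transitions given by diameter-flipping moves, and its expansion should be $\Omega(1/n)$ by a standard cycle analysis.

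The hard part will be extracting the sharp $\Omega(1/(\sqrt{n}\log n))$ expansion bound rather than the weaker $\Omega(1/(n\,\mathrm{polylog}\,n))$ bound that a naive application of JSTV would produce. I expect this tightness to arise from a refined congestion analysis that exploits the fact that each centrally symmetric triangulation is determined by roughly half of its data, reflected quantitatively in the Catalan asymptotic $C_n \sim 4^n/(\sqrt{\pi}\,n^{3/2})$; concretely, I would need to show that cuts of $\mathfrak{b}_n$ can be routed across diameter-adjacent fibers with congestion smaller by a factor of $\sqrt{n}$ than the corresponding type-$A$ routing would suggest. Once the expansion bound is established, the mixing time bound $O(n^3\log^3 n)$ follows from the standard conductance inequality $t_{\mathrm{mix}} \leq O(\log(1/\pi_{\min})/\Phi^2)$, using that $\mathfrak{b}_n$ is regular with $\log|V(\mathfrak{b}_n)| = \log\binom{2n}{n} = O(n)$ and accounting for the additional logarithmic overhead that appears in the Eppstein--Frishberg framework when converting expansion to mixing.
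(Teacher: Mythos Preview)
Your proposal correctly identifies the fiber decomposition by central diameter and the isomorphism $V_d\cong\mathfrak a_{n-1}$, but it has two concrete gaps that prevent it from proving the theorem as stated.

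\textbf{Expansion bound.} You acknowledge that a black-box application of the JSTV decomposition only yields $\Omega(1/(n\,\mathrm{polylog}\,n))$, and your plan for closing the $\sqrt{n}$ gap is speculative. (A side remark: the projection graph on the set of diameters is the \emph{complete} graph $K_{n+1}$, not a cycle---any central diagonal can be flipped to any other---so the ``cycle analysis'' you propose does not apply, though this is not the real obstacle.) The paper does not use JSTV here. Instead it constructs an explicit multicommodity flow on $\mathfrak b_n$ via the four-step shuffle/concentrate/transmit/distribute scheme of Eppstein--Frishberg, and the sharp congestion bound comes from a specific \emph{hierarchical dyadic grouping} of the classes $\Psi(D_i)$ into contiguous blocks of sizes $2,4,8,\ldots$ (adapting \cite[Lemma~41]{eppstein2022improvedfull}). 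At level $k$ the congestion across each matching $\mathcal E(D_i,D_j)$ is $O(\sqrt{2^k})$ because $|\mathcal E(D_i,D_j)|/|\Psi(D_i)|=\Theta(|i-j|^{-3/2})$ and $|i-j|\le 2^k$; summing over the $O(\log n)$ levels gives $O(\sqrt n)$. The induced within-class MSFs are controlled by identifying the boundary sets $\partial_{D_j}(D_i)$ with the ``special-edge-triangle'' classes $\hat\Psi(T)$ inside $\mathfrak a_{n-1}$ and invoking the single-commodity version of the Eppstein--Frishberg recursion. None of this is visible from JSTV alone, and your proposal does not supply a substitute.

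\textbf{Mixing time.} Your final sentence is incorrect. Plugging $h(\mathfrak b_n)=\Omega(1/(\sqrt n\log n))$ into the standard Cheeger/conductance bound $\tau=O(\Delta^2\log|V|/h^2)$ with $\Delta=n$ and $\log|V|=\Theta(n)$ gives $O(n^4\log^2 n)$, not $O(n^3\log^3 n)$. The paper explicitly notes this and then recovers the missing factor of $n/\log n$ via the Lov\'asz--Kannan average-conductance integral $\tau\le 32\int_{1/|V|}^{1/2}\Delta^2/(x\,h(x)^2)\,dx$, together with a recursive partition of $V(\mathfrak b_n)$ into Cartesian products of small type-$A$ associahedra (first split by the central diameter, then by central triangles on one side, recursively). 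This shows that small sets have substantially larger expansion, which is what makes the integral converge to $O(n^3\log^3 n)$. Your proposal omits this step entirely.
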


\begin{theorem}\label{thm:typeB_upper}
The expansion of $\mathfrak b_n$ is  
\[O\left(\frac{1}{\sqrt{n}}\right).\]
The mixing time of the simple random walk on $\mathfrak b_n$ is 
\[\Omega\left(n^{3/2}\right).
    \]
\end{theorem}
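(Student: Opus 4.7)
The plan is to construct a set $S \subseteq V(\mathfrak{b}_n)$ with $|S| = \Theta(|V(\mathfrak{b}_n)|)$ and $|\partial S|/|S| = O(1/\sqrt{n})$. Since $\mathfrak{b}_n$ is $n$-regular, this immediately gives the claimed expansion bound; combined with the standard Cheeger-type lower bound $t_{\mathrm{mix}}(\mathfrak{b}_n) = \Omega(1/\Phi)$, where the conductance is $\Phi = |\partial S|/(n|S|) = O(n^{-3/2})$, it also yields the mixing-time bound $\Omega(n^{3/2})$.

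The set $S$ will be defined via a coarse statistic on triangulations. Every centrally symmetric triangulation $T$ of a $(2n+2)$-gon contains a unique \emph{diameter}\textemdash a diagonal of the form $\{p, p+n+1\}$\textemdash because the center of the polygon is fixed by the half-turn but cannot lie in the interior of any triangle of $T$ (no triangle is centrally symmetric, since no vertex of the polygon is), so it must lie on an interior edge preserved by the symmetry. Call $p \bmod (n+1)$ the \emph{position} $\pi(T) \in \mathbb{Z}/(n+1)$. The two triangles incident to the diameter are exchanged by the central symmetry and hence have the form $\{p, p+n+1, r\}$ and $\{p, p+n+1, r+n+1\}$; the diameter flip replaces $\{p, p+n+1\}$ with the diameter $\{r, r+n+1\}$, so diameter flips carry diameters to diameters, while the $n-1$ paired non-diameter flips leave the position invariant. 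A direct count shows that, for each $\delta \in \{1, \ldots, n\}$, the number of triangulations at position $p$ whose diameter flip shifts the position by $+\delta$ is exactly $C_{\delta-1}\, C_{n-\delta}$ (once the bounding triangle $\{p, p+n+1, p+\delta\}$ is fixed, the two sub-polygons of $\delta+1$ and $n-\delta+2$ vertices are triangulated independently), and summing over $\delta$ gives $C_n$ triangulations at each position.

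Now take $S = \pi^{-1}(I)$, where $I = \{0, 1, \ldots, \lfloor n/2 \rfloor\}$, replacing $S$ by its complement if needed so that $|S| \leq |V(\mathfrak{b}_n)|/2$; then $|S| = \Theta(n C_n) = \Theta(|V(\mathfrak{b}_n)|)$. Only diameter flips can exit $S$, so
\[
|\partial S| \;=\; \sum_{\delta=1}^{n} N(\delta)\, C_{\delta-1}\, C_{n-\delta}, \qquad N(\delta) = \#\{(i,j) \in I \times I^{c} : j - i \equiv \delta \pmod{n+1}\},
\]
and a short case check gives $N(\delta) = \min(\delta,\, n+1 - \delta)$. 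Applying the Stirling estimate $C_m \sim 4^m/(\sqrt{\pi}\, m^{3/2})$, the half-sum $\sum_{\delta=1}^{\lfloor n/2\rfloor} \delta\, C_{\delta-1} C_{n-\delta}$ becomes $(4^{n-1}/\pi)\sum \delta/((\delta-1)^{3/2}(n-\delta)^{3/2})$, which under the substitution $\delta = nt$ turns into $(1/n)\int_0^{1/2} dt/(\sqrt{t}(1-t)^{3/2}) = \Theta(1/n)$. Hence $|\partial S| = \Theta(4^n/n) = \Theta(C_n \sqrt{n})$, and dividing by $|S| = \Theta(n C_n)$ gives $|\partial S|/|S| = \Theta(1/\sqrt{n})$, as required.

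The main technical obstacle is the Catalan-sum asymptotic in the last step: one must peel off the $\delta = 1$ term (where the Stirling approximation degenerates) and verify that the bulk of the sum, estimated by the above integral, is of the claimed order $C_n\sqrt{n}$\textemdash and in particular that the moderate-$\delta$ and extreme-$\delta$ contributions combine to give exactly this order, rather than, e.g., $C_n$ or $C_n \cdot n$. Everything else\textemdash identifying the diameter, counting triangulations at each position, and converting a small-expansion set into a mixing-time lower bound via Cheeger\textemdash is routine.
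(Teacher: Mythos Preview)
Your proposal is correct and follows essentially the same route as the paper: partition $V(\mathfrak b_n)$ by the central diagonal (your ``position''), take $S$ to be the union of classes corresponding to a contiguous block of roughly half the diagonals, use the formula $|\mathcal E(D,D')|=C_{d-1}C_{n-d}$ for the number of edges between classes at distance $d$, and sum to obtain $|\partial S|=O(|V|/\sqrt n)$. The paper's double-integral estimate and your sum $\sum_\delta N(\delta)\,C_{\delta-1}C_{n-\delta}$ with $N(\delta)=\min(\delta,\,n{+}1{-}\delta)$ are the same computation in slightly different notation.
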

\begin{theorem}\label{thm:typeD}
The expansion of $\mathfrak d_n$ is \[\Omega\left(\frac{1}{n^5\log n}\right).\]
The mixing time of the simple random walk on $\mathfrak d_n$ is 
\[O\left(n^{13}\log^2 n \right).
    \]
\end{theorem}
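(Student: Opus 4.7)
The plan is to adapt the Eppstein-Frishberg decomposition framework, already used for type $B$ in \cref{thm:typeB_lower}, to the more intricate combinatorics of type $D$, and then convert the resulting edge-expansion bound to a mixing-time bound by a standard spectral argument.

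First I would fix a combinatorial model, realizing $\mathfrak d_n$ as the flip graph of tagged triangulations of a once-punctured polygon. Vertices are maximal collections of pairwise compatible tagged arcs, and edges correspond to single-arc exchanges. This makes the diameter, degree, and vertex count transparent; in particular $\mathfrak d_n$ is roughly $n$-regular and $|V(\mathfrak d_n)| = \exp(O(n))$, so $\log|V(\mathfrak d_n)| = O(n)$, a fact I will use at the end.

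The heart of the argument is the expansion lower bound $\Omega\bigl(1/(n^5 \log n)\bigr)$. I would prove this by a recursive decomposition in the spirit of Eppstein-Frishberg. At each stage, partition the vertex set according to some structural feature (for example, the tagging at the puncture, or the set of arcs incident to the puncture, or whether a fixed arc belongs to the triangulation). On each part, the induced subgraph is itself a flip graph for a smaller problem, of type $A$, $B$, or a smaller type $D$, whose expansion is controlled either directly or by induction. The decomposition lemma from~\cite{eppstein2022improved}, descended from~\cite{jstv}, then assembles the expansion of each part together with the expansion of the quotient multigraph on the parts into an expansion bound for the full graph.

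The main obstacle, and the reason for the $n^5$ rather than $\sqrt{n}$ in the denominator, is the tagging structure at the puncture. Each arc incident to the puncture is either \emph{plain} or \emph{notched}, and compatibility forces all puncture-incident arcs of a given triangulation to share a tag, except when two of them together form a loop enclosing the puncture. This blocks the clean ``fix one arc'' decomposition that suffices in types $A$ and $B$, and forces a multi-stage recursion in which polynomial factors accumulate across stages; careful bookkeeping yields the claimed denominator $n^5 \log n$. Finally, to obtain the mixing-time bound I would apply the standard Cheeger-type inequality: for a $d$-regular graph on $N$ vertices with edge expansion $\phi$, the simple random walk mixes in time $O\bigl(d^2 \phi^{-2} \log N\bigr)$. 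Substituting $d = O(n)$, $\log N = O(n)$, and $\phi = \Omega\bigl(1/(n^5 \log n)\bigr)$ gives $O\bigl(n^2 \cdot n^{10} \log^2 n \cdot n\bigr) = O(n^{13} \log^2 n)$, as claimed.
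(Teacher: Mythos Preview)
Your final paragraph, converting expansion to mixing time via \eqref{eq:expmixing}, is fine and matches the paper. The gap is in the expansion bound itself: what you have written is a narrative, not a proof, and the mechanism you describe for the factor $n^5$ is not the one that actually works.

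Concretely, the paper does \emph{not} use a multi-stage recursion in which polynomial factors accumulate. It uses a single, non-recursive decomposition into the $2n$ classes $\Psi(\dd_i)$ indexed by centrally symmetric pairs of central chords of the punctured $2n$-gon $P_{2n}^\bullet$ (\cref{lem:TypeDClasses}). Each class is isomorphic to $\mathfrak a_{n-1}$, so \cref{lem:eppfrishtri} gives a flow with congestion $\rho=O(\sqrt n\log n)$ inside each class. The essential point\textemdash and the place where your ``fix one arc'' intuition is right but your conclusion is wrong\textemdash is that these classes are \emph{not} disjoint: a triangulation can contain two pairs of central chords. This is exactly why the paper needs the overlapping-classes version of the decomposition lemma (\cref{corollary}), which costs an extra factor of $k=2n$. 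The quotient is then controlled by computing $\mathcal E_{\min}$ directly: the intersections $\Psi(\clock\rr)\cap\Psi(\counter\rr)$ have size $C_{n-1}$, while $\Psi(\clock{\boldsymbol q})\cap\Psi(\clock\rr)$ has size $C_iC_j$ with $i+j=n$, giving $\mathcal E_{\min}=\Omega(n^{-3}4^n)$ and hence $|V|/\mathcal E_{\min}=O(n^{5/2})$. Plugging $\gamma=O(n)$, $k=O(n)$, and $\rho=O(\sqrt n\log n)$ into \cref{corollary} yields congestion $O(n^{5/2}\cdot n\cdot n\cdot\sqrt n\log n)=O(n^5\log n)$, whence the expansion bound via \cref{lemma:expansion}.

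So the missing ingredients in your proposal are: (i) the specific choice of classes and the recognition that they must overlap; (ii) the overlapping-classes decomposition lemma with its explicit $k$ penalty; and (iii) the Catalan-number computation of $\mathcal E_{\min}$. Your ``tagging at the puncture forces a multi-stage recursion'' story does not lead to these numbers, and without committing to a concrete decomposition and computing $\mathcal E_{\min}$, $\gamma$, $k$, there is no way to land on $n^5$ rather than, say, $n^4$ or $n^7$.
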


\section{Preliminaries on Expansion and Mixing}

Suppose $(X_t)_{t\geq 0}$ is a discrete-time Markov chain with finite state space $\Omega$. For each choice of an initial state $\xi\in\Omega$, we obtain a sequence $(\mu_t^\xi)_{t\geq 0}$ of probability distributions on $\Omega$ defined by $\mu_t^\xi(S)=\mathbb P(X_t\in S)$. The \dfn{total variation distance} between two probability distributions $\mu$ and $\nu$ on $\Omega$ is the quantity \[d(\mu,\nu)=\frac{1}{2}\sum_{S\subseteq \Omega}|\mu(S)-\nu(S)|.\] If the Markov chain $(X_t)_{t\geq 0}$ is irreducible, then it has a unique stationary distribution $\pi^*$. In this case, for each $\varepsilon>0$, we let $\tau(\varepsilon)$ be the smallest nonnegative integer such that \[\max_{\xi\in\Omega}d(\mu_t^\xi,\pi^*)<\varepsilon\] for all $t\geq \tau(\varepsilon)$; this integer $\tau(\varepsilon)$ is called the \dfn{mixing time} of the Markov chain. 

Let $G=(V,E)$ be a finite simple graph. The \dfn{simple random walk} on $G$ is the Markov chain with state space $V$ defined as follows. If the Markov chain is in state $v$ at time $t$, then it transitions to state $v'$ at time $t+1$, where $v'$ is chosen uniformly at random from the neighbors of $v$.\footnote{For technical reasons, \emph{laziness} is imposed so that the walk stays at $v$ with probability $1/2$.} When $G$ is connected, the simple random walk on $G$ is irreducible, and its stationary distribution $\pi^*$ is such that $\pi^*(v)$ is proportional to the degree of $v$. In particular, if $G$ is a regular graph, then $\pi^*$ is the uniform distribution on $V$. 

For each $S\subseteq V$, let $\partial S=\{\{s,t\}\in E :s\in S,\, t\not\in S\}$. The \dfn{expansion} of $G$ is the quantity 

\begin{equation}
h(G)=\min_{\substack{S\subseteq V \\ |S|\leq|V|/2}}\frac{|\partial S|}{|S|}.
\end{equation}
It is well known~\cite{jsconductance,sinclair_1992} that a lower bound for the expansion of $G$ implies an upper bound on the mixing time of $G$. More precisely, if $G$ is connected and has maximum degree $\Delta$, then the mixing time $\tau(\varepsilon)$ of the simple random walk on $G$ satisfies 
\begin{equation}\label{eq:expmixing}
\tau(\varepsilon)=O\left(\frac{\Delta^2\log(|V|)}{h(G)^2}\right),
\end{equation} 
where the implicit constant depends on $\varepsilon$. 

Let $E^+(G)=\bigcup_{\{u,v\}\in E}\{(u,v),(v,u)\}$ denote the set of all directed edges obtained by replacing each (undirected) edge~$\{u, v\} \in E$ with the directed edges~$(u, v)$ and~$(v, u)$. For $s,t\in V$, a \dfn{flow} from $s$ to $t$ is a function
$\phi\colon E^+ \to \mathbb{R}_{\geq 0}$ such that
\begin{enumerate}[(i)]
\item for all~$u,v \in V$, we have $\min\{\phi(u, v), \phi(v, u)\} = 0$;
\item  $\sum_{v \in V} \phi(s, v) = \sum_{u \in V}\phi(u, t)$;
\item for all $w \in V \setminus \{s, t\}$, we have $\sum_{u \in V} \phi(u, w) = \sum_{v \in V} \phi(w, v)$.
\end{enumerate}

A \dfn{multicommodity flow} in~$G$ is a collection $\Phi=\{\phi_{st}:s,t\in V\}$ of functions
such that $\phi_{st}$ is a flow from $s$ to $t$. In this paper, we will assume that $\Phi$ is \dfn{uniform} in the sense that \[\sum_{v\in V}\phi_{st}(s,v)=\sum_{u\in V}\phi_{st}(u,t)=1\] for all $s,t\in V$. For each $(u,v)\in E^+$, let $\Phi(u,v)=\frac{1}{|V|}\sum_{s,t\in V}\phi_{st}(u,v)$. The \dfn{congestion} of $\Phi$ is the quantity \[\Phi_{\max}=\max_{(u,v)\in E^+}\Phi(u,v).\] 
 
The following result relates flow and expansion. 
\begin{lemma}[\cite{jsconductance,sinclair_1992}]\label{lemma:expansion}
    If $\Phi$ is a multicommodity flow in a graph $G$ with congestion $\Phi_{\max}$, then the expansion of $G$ satisfies \[h(G)\geq\frac{1}{2 \Phi_{\max}}.\] 
\end{lemma}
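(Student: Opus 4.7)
The plan is to prove the standard double-counting bound that relates multicommodity flow congestion to edge expansion. Fix any subset $S\subseteq V$ with $|S|\leq |V|/2$, and write $\partial^+ S = \{(u,v) \in E^+(G) : u\in S,\, v\notin S\}$ for the directed edges crossing the cut outward. I will double-count the total amount of commodity that must cross the cut in this outward direction.

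First I would establish a lower bound on the total crossing flow. For any ordered pair $(s,t)$ with $s\in S$ and $t\notin S$, the uniformity assumption says that $\phi_{st}$ ships exactly one unit of flow from $s$ to $t$, so by flow conservation the net flow across the cut $\partial S$ (from $S$ to $V\setminus S$) equals $1$. Because condition (i) in the definition of a flow forces $\min\{\phi_{st}(u,v),\phi_{st}(v,u)\}=0$ on every undirected edge, the net flow across the cut is the same as the total flow sent along edges of $\partial^+ S$. Hence
\[
\sum_{(u,v)\in \partial^+ S} \phi_{st}(u,v) \;\geq\; 1.
\]
Summing this inequality over the $|S|(|V|-|S|)$ pairs $(s,t)$ with $s\in S$, $t\notin S$ yields
\[
\sum_{(u,v)\in \partial^+ S}\ \sum_{\substack{s\in S\\ t\notin S}} \phi_{st}(u,v) \;\geq\; |S|(|V|-|S|).
\]

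Next I would upper-bound the same quantity using the congestion. For any directed edge $(u,v)$, we have by definition $\sum_{s,t\in V}\phi_{st}(u,v) = |V|\cdot\Phi(u,v) \leq |V|\cdot\Phi_{\max}$. Restricting the inner sum only shrinks it, so
\[
\sum_{(u,v)\in\partial^+ S}\ \sum_{\substack{s\in S\\ t\notin S}}\phi_{st}(u,v) \;\leq\; \sum_{(u,v)\in\partial^+ S} |V|\cdot\Phi_{\max} \;=\; |\partial S|\cdot |V|\cdot\Phi_{\max},
\]
where the last equality uses $|\partial^+ S|=|\partial S|$. Combining the two bounds gives $|S|(|V|-|S|)\leq |\partial S|\cdot |V|\cdot \Phi_{\max}$, and since $|S|\leq |V|/2$ we have $|V|-|S|\geq |V|/2$, so $|\partial S|/|S|\geq 1/(2\Phi_{\max})$. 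Taking the minimum over admissible $S$ yields $h(G)\geq 1/(2\Phi_{\max})$.

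There is no real obstacle here: the argument is a clean double counting, and the only subtlety is remembering that condition (i) in the definition of a flow lets one identify the net crossing flow with the gross flow along $\partial^+ S$, which is what allows the lower bound of $1$ per source–sink pair rather than just a statement about net flow.
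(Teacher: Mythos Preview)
The paper does not supply its own proof of this lemma; it is quoted as a known result from \cite{jsconductance,sinclair_1992}. Your argument is the standard cut-counting proof and is correct.

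One small remark: your appeal to condition~(i) to identify the net crossing flow with the gross outward flow is not quite accurate, since condition~(i) is a per-edge statement and different cut edges may still carry $\phi_{st}$-flow in opposite directions across $\partial S$. However, you only need the inequality $\sum_{(u,v)\in\partial^+ S}\phi_{st}(u,v)\geq 1$, and that already follows from the net flow being $1$ together with nonnegativity of all flow values, so the proof goes through unchanged.
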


We aim to use the mixing results concerning the associahedron from \cite{eppstein2021rapid} to show rapid mixing for associahedra of types $B$ and $D$. To accomplish this, we will show how to take a uniform multicommodity flow for a subgraph isomorphic to the associahedron and use it to construct a uniform multicommodity flow for the entire graph. 

Suppose $G = (V, E)$ is a graph and $\mathcal C_1,\ldots,\mathcal C_k$ are subsets of $V$ such that $V=\mathcal C_1\cup\cdots\cup\mathcal C_k$. We define the \dfn{projection graph} $\overline{G}=(\overline V,\overline{E})$ to be the graph with vertex set $\overline V=\{\mathcal C_1,\ldots,\mathcal C_k\}$ in which two distinct vertices $\mathcal C_i$ and $\mathcal C_j$ are adjacent if there exist $u\in \mathcal C_i$ and $v\in\mathcal C_j$ such that $\{u,v\}\in E$ or $u=v$. For $\{\mathcal C_i,\mathcal C_j\}\in\overline{E}$, let $\overline w(\mathcal C_i,\mathcal C_j)$ be the number of pairs $(u,v)\in\mathcal C_i\times\mathcal C_j$ such that $\{u,v\}\in E$ or $u=v$.  
We will usually identify each subset $\mathcal C_i$ with the induced subgraph of $G$ on the vertex set $\mathcal C_i$; in particular, this allows us to speak about multicommodity flows in $\mathcal C_i$. 

The next lemma allows us to use multicommodity flows on subgraphs to construct a multicommodity flow for the entire graph. 

\begin{lemma}
    \label{lem:flowdecomp}
    Let $G = (V, E)$ be a connected graph, and let $\mathcal C_1,\ldots,\mathcal C_k$ be subsets of $V$ whose union is~$V$. Suppose that for each $1\leq i\leq k$, there is a uniform multicommodity flow~$\Phi_i$  in $\mathcal{C}_i$ with congestion at most~$\rho$. Suppose also that there exists a uniform multicommodity flow~$\overline{\Phi}$ in~$\overline{G}$ with congestion ~$\overline\rho$. Then there exists a uniform multicommodity flow~$\Phi$ in~$G$ with congestion at most~$(2\gamma k\overline{\rho} + 1)\rho$, where
\begin{equation}\label{eq:gamma}
\gamma = \max_{i\in[k]}\max_{u\in\mathcal{C}_i}(|\{v\in V \setminus \mathcal{C}_i : (u, v) \in E\}| + |\{j : u \in \mathcal{C}_j\}|).
\end{equation}
\label{lem:projoverlap}
\end{lemma}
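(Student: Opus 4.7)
The plan is to lift the projection flow $\overline{\Phi}$ to a flow in $G$ by using the inner flows $\Phi_i$ as local routing subroutines, and to add a direct ``diagonal'' inner-flow contribution whenever $s$ and $t$ already share a cluster. First, for each $v \in V$, let $n(v) = |\{i : v \in \mathcal{C}_i\}|$, so that each $v$ can be split as a mass $1/n(v)$ among its containing clusters. For each ordered pair $(s,t) \in V^2$ and each choice $(a,b)$ with $s \in \mathcal{C}_a$ and $t \in \mathcal{C}_b$, I take $\overline{\phi}_{\mathcal{C}_a\mathcal{C}_b}$, decompose it into weighted directed paths $\mathcal{C}_{i_0} \to \cdots \to \mathcal{C}_{i_\ell}$ with $i_0 = a$ and $i_\ell = b$, and lift each such path to a weighted family of paths in $G$: at each transition $\mathcal{C}_{i_{j-1}} \to \mathcal{C}_{i_j}$ I spread the flow uniformly over the $\overline{w}(\mathcal{C}_{i_{j-1}},\mathcal{C}_{i_j})$ pairs $(u,v) \in \mathcal{C}_{i_{j-1}}\times\mathcal{C}_{i_j}$ with $\{u,v\} \in E$ or $u = v$, and inside each $\mathcal{C}_{i_j}$ I use $\Phi_{i_j}$ to ferry the flow from the entry vertex to the chosen exit vertex. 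I scale everything by $1/(n(s)n(t))$, sum over $(a,b)$, and then add a diagonal correction: for each $i$ with $s,t \in \mathcal{C}_i$, a copy of $\phi^{(i)}_{st}$ scaled so that the total outflow from $s$ equals exactly $1$.

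Next, I would verify that the resulting $\Phi$ is a uniform multicommodity flow. Because each $\overline{\phi}_{\mathcal{C}_a\mathcal{C}_b}$ is a unit flow in $\overline{G}$ and each $\Phi_i$ is uniform in $\mathcal{C}_i$, the amount of flow leaving $s$ along each lifted piece equals the weight assigned to it, so $\sum_v \phi_{st}(s,v) = 1$, and the symmetric identity at $t$ follows identically. The antiparallel and Kirchhoff conditions in the definition of a flow are preserved under positive linear combinations of lifted paths and inner flows, so $\phi_{st}$ is indeed a flow.

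The heart of the proof is the congestion estimate. Fix a directed edge $(u,v) \in E^+$. If $(u,v)$ lies inside a cluster $\mathcal{C}_i$, then it carries two kinds of flow. The diagonal correction contributes at most $\rho$, since it is a convex combination of inner flows in $\mathcal{C}_i$. The lifted global flow on $(u,v)$ is at most $\rho$ times the total amount of lifted flow passing through $\mathcal{C}_i$, which equals the total $\overline{\Phi}$-flow on $\overline{G}$-edges incident to $\mathcal{C}_i$; this is bounded by $\overline{\rho}$ per incident $\overline{G}$-edge times at most $k$ incident edges. The factor $\gamma$ appears when translating $\overline{G}$-edge flow into $G$-edge flow: each appearance of $u$ as the exit vertex of a lifted transition (or of $v$ as the entry vertex of one) is chargeable to at most $\gamma$ distinct projection-graph incidences, by the very definition of $\gamma$ in~\eqref{eq:gamma}. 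The factor of $2$ accounts for $u$ playing either role (entry or exit) at a transition. An inter-cluster transition edge is handled by the same accounting with the outer $\rho$ replaced by $1$, which is dominated by the internal bound. Aggregating all contributions, $\Phi(u,v) \leq (2\gamma k \overline{\rho} + 1)\rho$.

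The main obstacle will be the combinatorial bookkeeping: carefully pinning down the factor $\gamma$ when translating the $\overline{\Phi}$-congestion on $\overline{G}$ to congestion on a specific directed $G$-edge, and ensuring the factor of $2$ (from in/out) and the factor of $k$ (from summing over clusters) appear in exactly the right places. Once one establishes that each directed $G$-edge $(u,v)$ is charged by at most $\gamma$ distinct projection transitions through $u$ (and likewise $v$), the rest is a routine summation, and the stated bound on $\Phi_{\max}$ follows.
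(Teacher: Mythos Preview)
Your approach is essentially the same as the paper's: the paper does not give a self-contained proof of this lemma but instead refers to tracing through \cite[Theorem~13]{eppstein2022improved}, whose construction is precisely the lift you describe (path-decompose $\overline{\Phi}$, spread each projection transition uniformly over the pairs counted by $\overline{w}$, and use the inner flows $\Phi_i$ as local routers, plus the direct inner contribution for same-class pairs).

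One point where your sketch and the paper diverge is the attribution of the extra factor $k$. You locate it in ``$\overline{\rho}$ per incident $\overline{G}$-edge times at most $k$ incident edges.'' That accounting, however, is already absorbed in the disjoint-class bound $(2\gamma\overline{\rho}+1)\rho$ of \cite{eppstein2022improved}; the per-edge quantity $\overline{\rho}$ does not need to be multiplied by the degree of $\mathcal{C}_i$ in $\overline{G}$. The paper's reason for the additional $k$ is that when the classes overlap, a fixed directed edge $(u,v)$ can lie in several of the $\mathcal{C}_i$ simultaneously, and the inner-flow routing inside each such class contributes separately to the congestion on $(u,v)$; summing these contributions over the at most $k$ classes containing $(u,v)$ produces the $k$. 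Your own final summary (``factor of $k$ from summing over clusters'') is compatible with this, but the sentence identifying $k$ with the number of incident $\overline{G}$-edges is not the correct mechanism. If you revise the congestion paragraph so that the $k$ enters as a sum over clusters containing $(u,v)$ rather than over neighbors of a fixed $\mathcal{C}_i$, your sketch matches the paper's argument.
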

Eppstein and Frishberg stated a version of the preceding lemma in terms of Markov chains~\cite[Theorem~13]{eppstein2022improved}. However, they stated this result without the factor of $k$ in the upper bound on the congestion of $\Phi$. They considered a more restricted setting in which $\mathcal C_1,\ldots,\mathcal C_k$ are pairwise disjoint. Allowing the classes to intersect nontrivially could reduce the congestion~$\overline\rho$ of the flow in the projection graph. It is straightforward to show (by tracing through the proof of \cite[Theorem~13]{eppstein2022improved}) that we pay for this improvement with an additional factor of at most $k$. This is because each flow within a class~$\mathcal C_i$ can now also produce congestion in more than one of the~$k$ classes. 

Following the reasoning in~\cite[Lemma 16]{eppstein2022improved} while accounting for potential nontrivial intersections of the classes (as in \cref{lem:projoverlap}) yields the following. Epppstein and Frishberg proved and used this (dealing with nontrivial intersections) in their paper but did not state it explicitly:
\begin{corollary}\label{corollary}
   Let $G = (V, E)$ be a graph, and let $\mathcal C_1,\ldots,\mathcal C_k$ be subsets of $V$ whose union is $V$. Suppose that for each $1\leq i\leq k$, there is a uniform multicommodity flow~$\Phi_i$ in $\mathcal{C}_i$ with congestion at most~$\rho$. 
   Then there exists a uniform multicommodity flow~$\Phi$ in~$G$ with congestion at most
    \[\left(\frac{2\gamma |V|}{\mathcal{E}_{\min}}+1\right)k\rho,\] where $\mathcal E_{\min}=\min\{\overline w(\mathcal C_i,\mathcal C_j):\overline w(\mathcal C_i,\mathcal C_j)>0\}$ and $\gamma$ is as in \eqref{eq:gamma}.
\end{corollary}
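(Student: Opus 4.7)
The strategy is to construct a uniform multicommodity flow $\overline{\Phi}$ on the projection graph $\overline{G}$ with congestion $\overline{\rho}\le |V|/\mathcal E_{\min}$, and then invoke \cref{lem:projoverlap}. Once such a $\overline{\Phi}$ is in hand, the bound from \cref{lem:projoverlap} reads
\[
(2\gamma k\overline{\rho}+1)\rho
\;\le\;\left(\frac{2\gamma k|V|}{\mathcal E_{\min}}+1\right)\rho
\;\le\;\left(\frac{2\gamma |V|}{\mathcal E_{\min}}+1\right)k\rho,
\]
where the final inequality uses $k\ge 1$ to absorb the additive $1$ inside the parentheses into the outer factor of $k$. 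So the corollary reduces to producing the flow $\overline{\Phi}$ with the claimed congestion bound.

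To build $\overline{\Phi}$, I would reuse the construction underlying \cite[Lemma~16]{eppstein2022improved}. View each edge $\{\mathcal C_a,\mathcal C_b\}$ of $\overline G$ as a bundle of $\overline w(\mathcal C_a,\mathcal C_b)\ge \mathcal E_{\min}$ parallel ``micro-edges,'' one for each pair $(u,v)\in\mathcal C_a\times\mathcal C_b$ witnessing $\{\mathcal C_a,\mathcal C_b\}\in\overline E$ (that is, either $\{u,v\}\in E$ or $u=v$). For each ordered pair of distinct classes $(\mathcal C_s,\mathcal C_t)$, route one unit of flow in $\overline G$ from $\mathcal C_s$ to $\mathcal C_t$ by splitting it across paths and, along each traversed edge, spreading it evenly across the micro-edges. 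The resulting load on each micro-edge is a factor of $1/\overline w(\mathcal C_a,\mathcal C_b)\le 1/\mathcal E_{\min}$ times the load on the corresponding edge of $\overline G$; an Eppstein--Frishberg-style counting argument, which charges each unit of unnormalized flow traversing $\{\mathcal C_a,\mathcal C_b\}$ to a distinct vertex of $G$, then yields the congestion bound $\overline{\rho}\le |V|/\mathcal E_{\min}$ after the normalization $\overline{\Phi}(u,v)=\tfrac{1}{|\overline V|}\sum_{s,t}\phi_{st}(u,v)$.

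With $\overline{\Phi}$ in hand, I combine it with the given flows $\Phi_1,\ldots,\Phi_k$ via \cref{lem:projoverlap} and substitute the congestion bound to conclude.

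The step I expect to be the main obstacle is verifying $\overline{\rho}\le |V|/\mathcal E_{\min}$ in the overlapping-class setting. Although \cite[Lemma~16]{eppstein2022improved} is formulated only for disjoint $\mathcal C_i$, the authors remark that the generalization is implicit in their work. My plan is to retrace the relevant portion of their argument and check that the extra ``identity'' micro-edges arising from the $u=v$ case in $\mathcal C_a\cap\mathcal C_b$ enter the congestion bookkeeping correctly, both in the definition of $\overline w$ and in the charging scheme, so that the final bound survives unchanged.
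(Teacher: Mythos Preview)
Your proposal is correct and follows essentially the same approach as the paper: construct a uniform multicommodity flow on the projection graph $\overline{G}$ with congestion at most $|V|/\mathcal E_{\min}$ by tracing the argument of \cite[Lemma~16]{eppstein2022improved} (adjusted for overlapping classes), then plug this $\overline\rho$ into \cref{lem:projoverlap} and simplify. The paper's own justification is in fact even terser than yours---it simply points to \cite[Lemma~16]{eppstein2022improved} and notes that Eppstein and Frishberg already handled the overlapping-class case implicitly---so your sketch of the micro-edge routing and the final arithmetic $(2\gamma k\overline\rho+1)\rho\le\bigl(\tfrac{2\gamma|V|}{\mathcal E_{\min}}+1\bigr)k\rho$ is, if anything, more explicit than what the paper provides.
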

Since any given multicommodity flow $\phi$ in a graph~$G$ is still valid if edges are added to~$G$, we observe that \cref{corollary} still holds with the same bound if for some pairs~$\mathcal{C}_i, \mathcal{C}_j$, the set of edges between~$\mathcal{C}_i$ and~$\mathcal{C}_j$ is removed from the projection graph so long as the resulting graph is still connected. 

We use the well known fact that the number of vertices of the associahedron $\mathfrak a_n$ is the $(n+1)$-th \dfn{Catalan number} 
$$C_{n+1} = \frac{1}{n+2}\binom{2n+2}{n+1}.$$
The number of vertices of $\mathfrak b_n$ is $\binom{2n}{n}$, and the number of vertices of $\mathfrak d_n$ is $\frac{3n-2}{n}\binom{2n-2}{n-1}$. 

The main result of Eppstein and Frishberg (written in terms of congestion) is as follows.
\begin{lemma}[\cite{eppstein2022improved}]
\label{lem:eppfrishtri}
There exists a uniform multicommodity flow in $\mathfrak a_n$ with congestion~$O(\sqrt n \log n)$.
\end{lemma}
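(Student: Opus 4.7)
The plan is to construct the flow recursively, exploiting the ``face'' structure of the associahedron: for any diagonal $d$ of the $(n+3)$-gon, the set $\mathcal C_d$ of triangulations containing $d$ induces a subgraph of $\mathfrak a_n$ isomorphic to the Cartesian product $\mathfrak a_{n_1}\times\mathfrak a_{n_2}$ of two smaller associahedra, with $n_1+n_2=n-1$; choosing $d$ to be \emph{near-central} (separating the polygon into two arcs of nearly equal length) guarantees $n_1,n_2\le\lceil n/2\rceil$. I would first select a small family $\mathcal D$ of near-central diagonals such that every triangulation of the $(n+3)$-gon contains at least one $d\in\mathcal D$, so that $\{\mathcal C_d\}_{d\in\mathcal D}$ is a cover of $V(\mathfrak a_n)$ suitable for \cref{corollary} or \cref{lem:flowdecomp}.

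Using the inductive hypothesis on the smaller associahedra, I would build a uniform flow on each $\mathcal C_d$ by a standard product construction: route first within the ``left'' factor at a fixed state of the ``right'' factor, then within the right. A direct calculation shows that the normalized congestion in $\mathfrak a_{n_1}\times\mathfrak a_{n_2}$ is bounded by the maximum of the factor congestions, yielding congestion $O(\rho(\lceil n/2\rceil))$ on each $\mathcal C_d$, where $\rho(m)$ denotes the congestion of a uniform flow on~$\mathfrak a_m$.

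Next I would design a low-congestion multicommodity flow on the projection graph~$\overline G$ over~$\mathcal D$: any two non-crossing near-central diagonals are jointly contained in a triangulation, producing an edge of~$\overline G$, and the resulting highly structured graph admits an explicit flow with congestion~$\overline\rho=O(1/|\mathcal D|)$. The overlap parameter~$\gamma$ from~\eqref{eq:gamma} is~$O(1)$, since any single triangulation contains only $O(1)$ near-central diagonals and has $O(1)$ flip-neighbors that leave its class. Plugging these into \cref{lem:flowdecomp} produces a recursion of the shape
\[
\rho(n)\le c\,\rho(\lceil n/2\rceil)+O(\sqrt n),
\]
and tuning $|\mathcal D|=\Theta(\sqrt n)$ so that $c=\sqrt 2$ makes the solution $\rho(n)=O(\sqrt n\log n)$, matching the desired bound.

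The main obstacle will be tuning the family~$\mathcal D$ so that all four requirements---covering every triangulation, clean product structure inside each class, controlled projection-graph congestion, and a recursion that solves to exactly $O(\sqrt n\log n)$---hold simultaneously. Finding the combinatorially cleanest family that balances these constraints (so that neither the per-class congestion nor the $k\overline{\rho}$ overhead blows up) is the genuinely delicate step that distinguishes this bound from cruder canonical-path approaches such as McShine--Tetali's $O(n^5\log n)$ argument.
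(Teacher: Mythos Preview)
This lemma is not proved in the paper; it is quoted from Eppstein and Frishberg~\cite{eppstein2022improved}. Your sketch captures the spirit of their argument (recursive decomposition, product structure on the classes, a recurrence that solves to $O(\sqrt n\log n)$), but two of your steps do not go through as written.

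First, the family of \emph{near-central diagonals} cannot simultaneously cover $V(\mathfrak a_n)$ and yield $\gamma=O(1)$. If ``near-central'' means the two arcs have lengths within $O(1)$ of each other, then a triangulation whose central triangle has arc lengths roughly $(n/3,n/3,n/3)$ contains no such diagonal at all, so $\{\mathcal C_d\}_{d\in\mathcal D}$ is not a cover. If you relax ``near-central'' to allow splits as unbalanced as $1{:}2$ (which is what the central-triangle argument actually guarantees), then a fan triangulation from a single vertex contains $\Theta(n)$ such diagonals, so $\gamma=\Theta(n)$, not $O(1)$. Eppstein and Frishberg sidestep this tension by partitioning according to the \emph{central triangle} (each triangulation has exactly one), or alternatively by fixing a bottom edge $e^*$ and partitioning by the unique triangle on $e^*$; see the classes $\hat\Psi(T_i)$ in \cref{sec:oriented} and the central-triangle decomposition in \cref{lem:efsmallpart}. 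Both are genuine partitions, so the covering/overlap issue disappears, but the two sub-associahedra in a class can be very unbalanced, which is why additional work is needed.

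Second, the additive recursion $\rho(n)\le c\,\rho(\lceil n/2\rceil)+O(\sqrt n)$ does not follow from \cref{lem:flowdecomp}: that lemma gives the purely multiplicative bound $(2\gamma k\overline\rho+1)\rho$, so a black-box application yields $\rho(n)\le C\cdot\rho(n/2)$ for some constant $C>1$, i.e.\ only a polynomial bound. Obtaining the additive $O(\sqrt n)$ term is exactly the delicate part of~\cite{eppstein2022improved}: they open up the flow into ``shuffle / concentrate / transmit / distribute'' MSFs (see \cref{sec:setup,sec:steps}) and show via \cref{lem:logtrick} and \cref{lem:singlemsf} that the inter-class routing contributes $O(\sqrt n)$ congestion \emph{in addition to}, rather than multiplied by, the intra-class congestion. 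Your proposal would need to reproduce that MSF analysis, not just invoke \cref{lem:flowdecomp}.
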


\section{Associahedra of Type $B$}
\label{sec:typeb}
We write $\mathfrak b_n$ for the $1$-skeleton of the $n$-dimensional associahedron of type $B$ (also called the \emph{cyclohedron}). We begin this section with a combinatorial model for $\mathfrak b_n$. The vertex set consists of centrally-symmetric triangulations of a regular $(2n+2)$-gon $P_{2n+2}$; two such triangulations are connected by an edge if one can be obtained from the other via one of the following \dfn{flips}:
\begin{enumerate}
\item a diagonal flip that replaces one diagonal of the $(2n+2)$-gon with another diagonal; 
\item a pair of centrally symmetric diagonal flips, which replaces two centrally symmetric diagonals with two other centrally symmetric diagonals. More explicitly, consider the unique quadrilateral that contains a given diagonal. Then that diagonal is flipped to the other diagonal of this quadrilateral.  
\end{enumerate}

The rules for flipping diagonals in $\mathfrak{b}_n$ are similar to those of $\mathfrak{a}_n$ except the flips maintain the symmetry of the polygon. \cref{fig:typeB} shows $\mathfrak b_3$. 

\begin{figure}
    \centering
    \includegraphics[width = 0.6\textwidth]{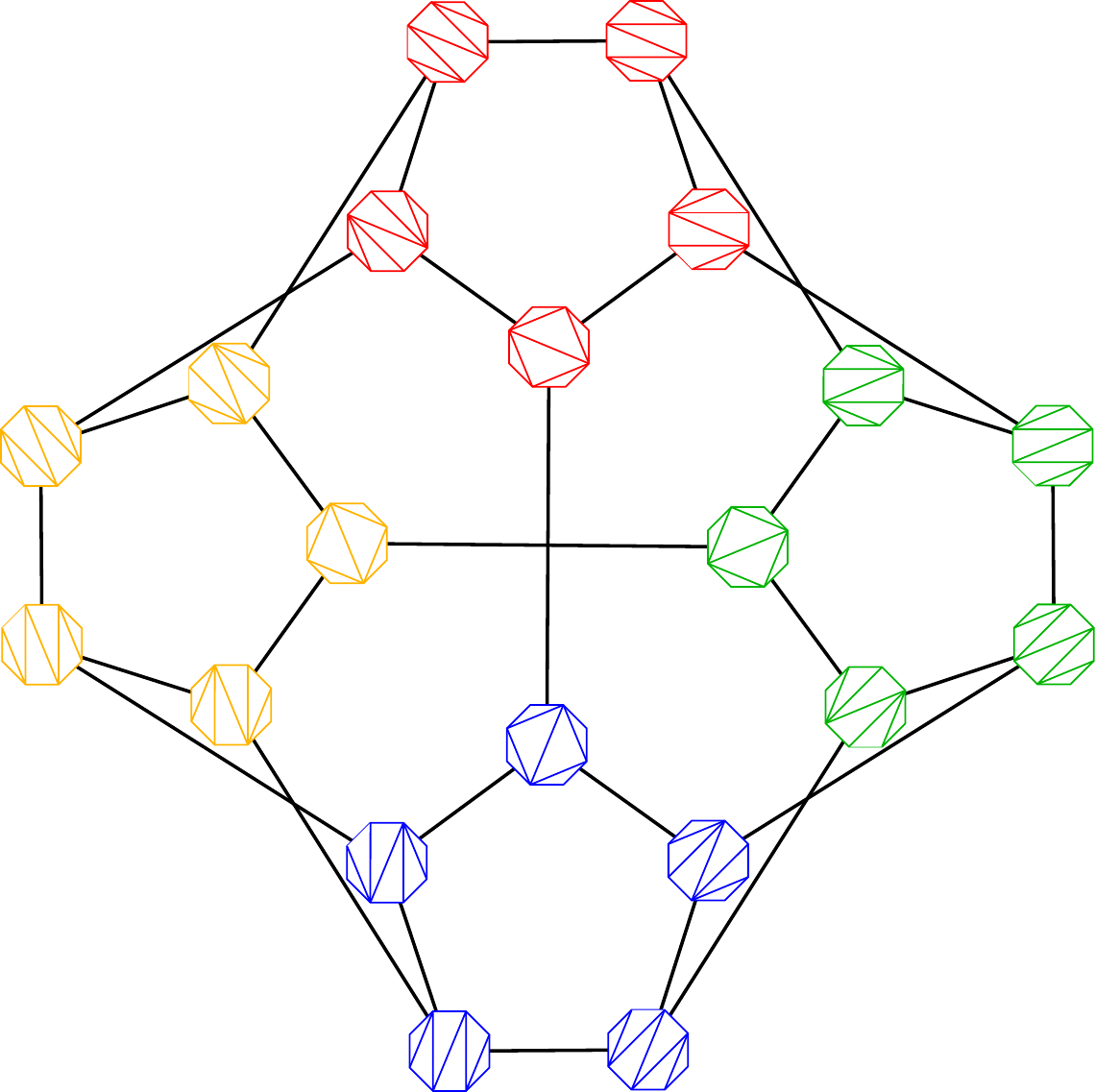}
    \caption{The graph $\mathfrak b_3$. Triangulations of the same color belong to the same class, meaning they have the same central diagonal.}
    \label{fig:typeB}
\end{figure}

A diagonal of $P_{2n+2}$ is \dfn{central} if it passes through the center of $P_{2n+2}$. Each centrally-symmetric triangulation of $P_{2n+2}$ contains a unique central diagonal. Thus, it makes sense to partition the vertices of $\mathfrak b_n$ by their central diagonals: given a central diagonal $D$, let $\Psi(D)$ denote the set of centrally-symmetric triangulations of $P_{2n+2}$ that use $D$. We will identify $\Psi(D)$ with the subgraph of the $1$-skeleton of $\mathfrak b_n$ that it induces. 
\begin{remark}\label{remark:typeB}
If $D$ is a central diagonal of $P_{2n+2}$, then each triangulation $T\in \Psi(D)$ is uniquely determined by the diagonals that $T$ uses on one particular side of $D$ (because $T$ is centrally symmetric). These diagonals create a triangulation of an $(n+2)$-gon. Thus, $\Psi(D)$ is isomorphic to $\mathfrak a_{n-1}$. For example, each of the four classes in \cref{fig:typeB} is a $5$-cycle, which is isomorphic to $\mathfrak a_2$. 
\end{remark}

Given diagonals $D$ and $D'$ of $P_{2n+2}$, let  $\mathcal{E}(D, D')$ be the set of edges of $\mathfrak b_n$ with one endpoint in $\Psi(D)$ and one endpoint in $\Psi(D')$.

\begin{figure}
    \centering
    \includegraphics[height=4cm]{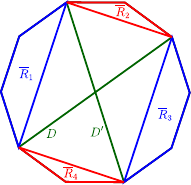}
    \caption{An illustration of part of the proof of \Cref{thm:typeBloose} with $n=4$. Selecting an edge in $\mathcal E(D,D')$ is equivalent to choosing triangulations of $\overline R_1$ and $\overline R_2$. }
    \label{fig:typeB_decomp}
\end{figure}

In this section, we prove a lower bound on the expansion of the type-$B$ associahedron that is weaker than the one stated in \cref{thm:typeB_lower} in order to illustrate our key ideas. The proof of the tighter bound is more involved and is deferred to the appendix. 

\begin{proposition}\label{thm:typeBloose}
The expansion of $\mathfrak b_n$ is  
\[\Omega\left(\frac{1}{n^{4}\log n}\right).\]
The mixing time of the simple random walk on $\mathfrak b_n$ is 
\[O\left(n^{11}\log^2 n \right).
    \]
\end{proposition}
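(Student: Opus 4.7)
The plan is to apply \cref{corollary} to the partition of $V(\mathfrak b_n)$ by central diagonal. By \cref{remark:typeB}, each class $\Psi(D)$ is isomorphic to $\mathfrak a_{n-1}$, so \cref{lem:eppfrishtri} supplies a uniform multicommodity flow inside each class with congestion $\rho=O(\sqrt{n}\log n)$. There are $k=n+1$ classes, and because they are pairwise disjoint and each vertex of $\mathfrak b_n$ has exactly one external neighbor---the one obtained by flipping its unique central diagonal---the parameter $\gamma$ in \eqref{eq:gamma} can be taken to be $O(1)$.

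The substantive step is to control $\mathcal{E}_{\min}$. Any two distinct central diagonals $D=\{i,i+n+1\}$ and $D'=\{j,j+n+1\}$ of $P_{2n+2}$ have disjoint endpoint sets that together form a centrally symmetric quadrilateral $Q$ whose two diagonals are $D$ and $D'$; any centrally symmetric triangulation using $D$ together with the two triangles of $Q$ incident to $D$ therefore contributes to $\mathcal E(D,D')$. By central symmetry such a triangulation is determined by its restriction to one side of $D$, and once the triangle at $D$ is fixed the remaining freedom is a pair of smaller triangulations. A standard Catalan count then gives $|\mathcal E(D,D')|=C_{a-1}C_{n-a}$, where $a=|i-j|$ is the cyclic distance. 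Restricting the projection graph to the connected cycle of ``neighboring'' diagonals (those pairs with $a=1$)---permissible by the remark following \cref{corollary}---yields $\mathcal E_{\min}\ge C_{n-1}$, and then the direct calculation $|V(\mathfrak b_n)|/\mathcal E_{\min}=\binom{2n}{n}/C_{n-1}=2(2n-1)=O(n)$ takes care of the ratio.

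Plugging $\gamma=O(1)$, $k=O(n)$, $\rho=O(\sqrt n\log n)$, and $|V|/\mathcal E_{\min}=O(n)$ into \cref{corollary} produces a uniform multicommodity flow in $\mathfrak b_n$ with congestion $O(n^{5/2}\log n)$, comfortably within the target $O(n^{4}\log n)$. The expansion bound in the proposition then follows from \cref{lemma:expansion}, and the mixing-time bound comes from inserting this expansion, the maximum degree $\Delta=n$, and $\log|V(\mathfrak b_n)|=O(n)$ into \eqref{eq:expmixing}. The step I expect to require the most care is the count of $|\mathcal E(D,D')|$: one has to verify that central symmetry eliminates one of the two halves cleanly (producing a single Catalan product rather than its square), check the degenerate edge cases where a sub-polygon is a triangle, and confirm that triangulations correspond bijectively to edges of $\mathcal E(D,D')$ rather than to a double cover. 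Everything else is bookkeeping with \cref{corollary} and the standard expansion-to-mixing conversion.
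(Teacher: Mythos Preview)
Your proof is correct and in fact yields a strictly stronger bound than the proposition claims. You use the same decomposition as the paper---partitioning $V(\mathfrak b_n)$ by central diagonal and invoking \cref{corollary} with $\rho=O(\sqrt n\log n)$ from \cref{lem:eppfrishtri}---but you diverge at the estimate of $\mathcal E_{\min}$. The paper keeps the full projection graph, so the worst pair $(D,D')$ is one with cyclic distance $\lfloor (n+1)/2\rfloor$, giving $\mathcal E_{\min}=C_{\lfloor(n+1)/2\rfloor-1}C_{\lceil(n+1)/2\rceil-1}=\Omega(n^{-3}4^n)$ and hence $|V|/\mathcal E_{\min}=O(n^{5/2})$; this is what produces the $O(n^4\log n)$ congestion and the stated bounds. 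You instead invoke the remark after \cref{corollary} to prune the projection graph down to the $(n+1)$-cycle of adjacent diagonals ($a=1$), which raises $\mathcal E_{\min}$ to $C_{n-1}$ and drops $|V|/\mathcal E_{\min}$ to $2(2n-1)=O(n)$. The resulting congestion $O(n^{5/2}\log n)$ then gives expansion $\Omega(n^{-5/2}/\log n)$ and mixing time $O(n^8\log^2 n)$, comfortably implying the proposition.

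What the paper's version buys is simplicity (no appeal to the pruning remark) and it suffices as the warm-up it is meant to be; the sharper $O(\sqrt n\log n)$ congestion of \cref{thm:typeB_lower} is obtained in the appendix by an entirely different, more delicate flow construction. What your version buys is a free factor of $n^{3/2}$ in both the expansion and mixing bounds with essentially no extra work---a nice observation. Your cautionary remarks about the $|\mathcal E(D,D')|$ count are well placed but turn out to be routine: central symmetry does collapse the count to a single Catalan product (as the paper confirms with the same formula $C_{k_1-1}C_{k_2-1}$), and the edge-versus-double-cover issue does not arise because the flip from $D$ to $D'$ is unique once both triangulations are specified.
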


\begin{proof}
Our strategy is to apply \cref{corollary} to the setting in which $G$ is $\mathfrak b_n$. To this end, let $D_1,\ldots,D_{n+1}$ be the central diagonals of $P_{2n+2}$, and let $\mathcal C_i=\Psi(D_i)$. (We are setting $k=n+1$ in \cref{corollary}.)

Let $D$ and $D'$ be distinct central diagonals of $P_{2n+1}$. Let $Q$ be the quadrilateral whose vertices are the endpoints of $D$ and $D'$. Let $R_1,R_2,R_3,R_4$ be the sides of $Q$, listed clockwise. For $1\leq r\leq 4$, let $\overline R_r$ be the polygon formed by $R_r$ and the sides of $P_{2n+2}$ that are not on the same side of $R_r$ as $Q$. Each edge in $\mathcal E(D,D')$ must flip $D$ into $D'$. Choosing such an edge is equivalent to choosing a triangulation of $\overline R_1$ and a triangulation of $\overline R_2$ (which then determine triangulations of $\overline R_3$ and $\overline R_4$ by central symmetry). (See \cref{fig:typeB_decomp}.)
Let $k_1$ (respectively, $k_2$) be the number of sides of $\overline R_1$ (respectively, $\overline R_2$) that are also sides of $P_{2n+2}$. Then $k_1+k_2=n+1$. The number of triangulations of $\overline R_1$ (respectively, $\overline R_2$) is the Catalan number $C_{k_1-1}$ (respectively, $C_{k_2-1}$). Thus, $|\mathcal E(D,D')|=C_{k_1-1}C_{k_2-1}$. 

It follows from the preceding paragraph that 
\begin{equation}
\mathcal E_{\min}=\min_{\substack{k_1,k_2\geq 1 \\ k_1+k_2=n+1}}C_{k_1-1}C_{k_2-1}=C_{\left\lfloor (n+1)/2\right\rfloor-1}C_{\left\lceil (n+1)/2\right\rceil-1}=\Omega(n^{-3}4^n).
\label{eq:emincc}
\end{equation}
The number of vertices of $\mathfrak b_n$ is $|V| = \binom{2n}{n}=O(n^{-1/2}4^n)$, so 
 \begin{equation}
  \frac{|
  V|}{\mathcal{E}_{\min}} =  \Omega(n^{5/2}).
 \end{equation}

For each class $\Psi(D)$ and each vertex $u\in\Psi(D)$, there is at most one edge connecting $u$ to a vertex that is not in $\Psi(D)$ corresponding to the flip of $D$. Since each vertex of $\mathfrak b_n$ belongs to a unique class (i.e., each centrally symmetric triangulation has exactly one central diagonal), the quantity $\gamma$ defined in \eqref{eq:gamma} is $2$. Each class $\Psi(D)$ is isomorphic to the $1$-skeleton of $\mathfrak a_{n-1}$, so we can combine \cref{lem:eppfrishtri} with \cref{corollary} to conclude that there exists a uniform multicommodity flow in the $1$-skeleton of $\mathfrak b_n$ with congestion at most 
\begin{equation}\label{eq:typeB}
\left(\frac{2\gamma |V|}{\mathcal{E}_{\min}} + 1\right) k\cdot O(\sqrt{n}\log n) = O(n^{4}\log n).
\end{equation}    
According to \cref{lemma:expansion}, the expansion of $\mathfrak b_n$ satisfies \[h(\mathfrak b_n)=\Omega\left(\frac{1}{n^{4}\log n}\right).\]  
Since $\mathfrak b_n$ is a regular graph of degree $n$, we conclude from \eqref{eq:expmixing} that the mixing time $\tau(\varepsilon)$ of the simple random walk on $\mathfrak b_n$ satisfies 
\begin{equation}     \tau(\varepsilon)=O\left(\frac{(n-1)^2 \log(\binom{2n-2}{n-1})}{h(\mathfrak b_n)^2} \right) =     O\left(n^{11}\log^2 n \right). \qedhere
\end{equation}
\end{proof}

\section{An Upper Bound on the Expansion of $\mathfrak b_n$}
\label{sec:upper}
To obtain the upper bound on expansion stated in \cref{thm:typeB_upper}, we just need to show that there exists a cut $(S, V\backslash S)$ such that $\frac{|\partial S|}{|S|} = O(\frac{1}{\sqrt{n}})$, where $\partial S = \{(u, v) : u \in S,\, v \in V \backslash S\}$.

For this construction, number the vertices of the regular $(2n+2)$-gon $P_{2n+2}$ as $1,\dots,2n+2$. Let $\Xi$ be the set of central diagonals of $P_{2n+2}$ that have a vertex in $[1,(n+1)/2]$. Let \[S = \bigcup_{D \in\Xi}\Psi(D)\quad\text{and}\quad\overline S=\bigcup_{D\not\in\Xi}\Psi(D)=V\setminus S.\] 
We will show that the number of edges between $S$ and $\overline{S}$ is small. 

    \begin{figure}
    \centering
    \includegraphics[height=5cm]{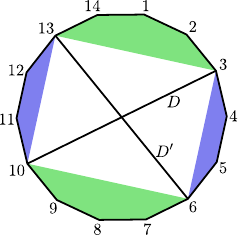}
    \caption{Two central diagonals $D$ and $D'$ of $P_{14}$ with $\Psi(D)\subseteq S$ and $\Psi(D')\subseteq \overline S$. The rectangle whose diagonals are $D$ and $D'$ breaks $P_{14}$ into two $4$-gons (blue) and two $5$-gons (green), so $d(D,D')=3$.}
    \label{fig:S decomposition}
\end{figure}

Consider central diagonals $D,D'$ such that $\Psi(D) \subseteq S$ and $\Psi(D') \subseteq \overline{S}$. Every triangulation in $\mathcal E(D,D')$ includes as diagonals the four sides of the rectangle whose diagonals are $D$ and $D'$. There is a unique integer $d=d(D,D')$ satisfying $1\leq d\leq (n+1)/2$ such that the parts of $P_{2n+2}$ outside of this rectangle are two $(d+1)$-gons and and two $(n-d+2)$-gons. See \cref{fig:S decomposition}. This decomposition implies that \[|\mathcal E(D,D')|=C_{d-1}C_{n-d}.\]

\begin{lemma}\label{lem:EDD'} We have 
 \[
    \frac{|\mathcal{E}(D, D')|}{|\Psi(D)|}= \frac{|\mathcal{E}(D, D')|}{|\Psi(D')|} = O\left(\frac{1}{d(D, D')^{3/2}}\right).
\]
\end{lemma}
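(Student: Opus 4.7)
The plan is to reduce both ratios to a single Catalan asymptotic and then exploit the hypothesis that $d(D,D')$ is at most $(n+1)/2$.

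First, I would identify the cardinalities involved. By \Cref{remark:typeB}, $\Psi(D) \cong \mathfrak a_{n-1}$, and the vertices of $\mathfrak a_{n-1}$ correspond to triangulations of an $(n+2)$-gon, so $|\Psi(D)| = |\Psi(D')| = C_n$. Combined with the identity $|\mathcal E(D,D')| = C_{d-1} C_{n-d}$ established in the paragraph preceding the lemma, both ratios in the statement coincide, and the lemma reduces to proving
\[
\frac{C_{d-1}\, C_{n-d}}{C_n} \;=\; O\!\left(\frac{1}{d^{3/2}}\right) \qquad \text{uniformly for } 1 \le d \le \tfrac{n+1}{2}.
\]

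Next, I would invoke the standard two-sided Catalan estimate $C_k = \Theta\!\bigl(4^k/(k+1)^{3/2}\bigr)$, which holds uniformly for all $k \ge 0$ (the $(k+1)^{-3/2}$ form is convenient because it handles the endpoint $k=0$ without a separate argument). Substituting this bound for each of $C_{d-1}$, $C_{n-d}$, and $C_n$, the powers of $4$ telescope since $4^{d-1}\cdot 4^{n-d} = 4^{n-1}$, and the ratio simplifies to
\[
\Theta\!\left( \frac{(n+1)^{3/2}}{d^{3/2}\, (n-d+1)^{3/2}} \right) \;=\; O\!\left( \frac{n^{3/2}}{d^{3/2}\,(n-d+1)^{3/2}} \right).
\]

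Finally, I would use the constraint $d \le (n+1)/2$, which yields $n-d+1 \ge (n+1)/2$ and hence $(n-d+1)^{3/2} = \Theta(n^{3/2})$. This cancels the $n^{3/2}$ in the numerator and leaves the claimed $O(d^{-3/2})$ bound. I do not anticipate any real obstacle here: the argument is essentially Stirling/Catalan bookkeeping once the two cardinalities $|\Psi(D)| = C_n$ and $|\mathcal E(D,D')| = C_{d-1}C_{n-d}$ are recognized. The only subtle point is the boundary case $d=1$, where $C_{d-1} = C_0 = 1$ and a naive $k^{-3/2}$ asymptotic would be singular; using the $(k+1)^{-3/2}$ version of the Catalan bound sidesteps this uniformly.
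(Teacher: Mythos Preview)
Your proposal is correct and follows essentially the same approach as the paper: both compute the ratio $C_{d-1}C_{n-d}/C_n$ via the standard Catalan asymptotic $C_k=\Theta(4^k/k^{3/2})$ and then use $d\le(n+1)/2$ to cancel the $n^{3/2}$ factors. Your treatment is in fact slightly more careful than the paper's, which writes the asymptotic with $(d-1)^{-3/2}$ and $(n-d)^{-3/2}$ and so does not explicitly address the endpoint $d=1$.
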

\begin{proof}
As mentioned above, we have $|\mathcal{E}(D, D')|~=~C_{d-1} C_{n-d}$. Therefore,
 \[
        \frac{|\mathcal{E}(D, D')|}{|\Psi(D)|} = O\left(  \frac{\frac{1}{(d-1)^{3/2}} \cdot\frac{1}{(n-d)^{3/2}}4^{d-1+n-d}}{\frac{1}{n^{3/2}} 4^n}\right) = O\left(\frac{1}{d^{3/2}}\right),
   \]
as desired.
\end{proof}

\cref{lem:EDD'} tells us that if $d(D,D')$ is large, then there are few edges between the classes $\Psi(D)$ and $\Psi(D')$.

\begin{proof}[Proof of \cref{thm:typeB_upper}] 
Let $y(D)=\min\limits_{D'\not\in\Xi}d(D, D')$. By replacing $d$ with the continuous integration variable $x$, we find that
\begin{align*}
    |\partial S| &= \sum_{D \in \Xi} \sum_{D' \not\in\Xi} |\mathcal{E}(D, D')|\\
    &=O\left( 2\sum_{D \in S} \int_{y(D)}^{n/2 }\frac{1}{x^{3/2}} |\Psi(D)| dx\right)\\
    &= O\left( \int_{1}^{n/2}\int_{y}^{n/2} \frac{1}{x^{3/2}}C_ndx dy\right)\\
    &=O\left(\frac{|V|}{n}\int_{1}^{n/2}\frac{1}{\sqrt{y}} dy\right)\\
    &= O\left(\frac{|V|}{n}\sqrt{n}\right)\\
    &= O\left(\frac{|V|}{\sqrt{n}}\right),
\end{align*}
where in the second equality, we used the fact that $y(D) \in [1, (n+1)/4]$ and the fact that for each $k\neq (n+1)/4$, there are $2$ different central diagonals $D$ such that $y(D) = k$.
Since $|S| = \Theta\left(\frac{|V|}{2}\right)$ it follows that 
\[    h(\mathfrak{b}_n) \leq \frac{|\partial S|}{|S|} = O\left(\frac{1}{\sqrt{n}}\right),
\]
as desired.
    \end{proof}

\section{Associahedra of Type $D$}

Let $\mathfrak d_n$ denote the $n$-dimensional associahedron of type $D$. We use the following combinatorial model of $\mathfrak d_n$. 

Begin with a regular $2n$-gon $P_{2n}$. Choose a small positive number $\varepsilon$, and let $\bullet$ be the disc of radius $\varepsilon$ whose center coincides with that of $P_{2n}$. Let us assume that $\varepsilon$ is chosen small enough so that the only diagonals of $P_{2n}$ intersecting $\bullet$ are the central diagonals. Let $P_{2n}^\bullet$ be the punctured $2n$-gon that results from removing $\bullet$ from $P_{2n}$. 

Define a \dfn{central chord} of $P_{2n}^\bullet$ to be a line segment $L$ such that one endpoint of $L$ is a vertex of $P_{2n}$ while the other endpoint of $L$ is a point at which $L$ is tangent to $\bullet$. A \dfn{chord} of $P_{2n}^\bullet$ is a line segment that is either a non-central diagonal of $P_{2n}$ or a central chord of $P_{2n}^\bullet$. A \dfn{triangulation} of $P_{2n}^\bullet$ is a maximal collection of chords of $P_{2n}^\bullet$ that do not cross each other. Say such a triangulation is \dfn{centrally symmetric} if it is invariant under $180^\circ$-rotation about the center of $P_{2n}^\bullet$. The vertices of $\mathfrak d_n$ correspond to centrally symmetric triangulations of $P_{2n}^\bullet$. We can perform a \dfn{flip} on a centrally symmetric triangulation of $P_{2n}^\bullet$ by deleting a pair of centrally symmetric chords and replacing them with the unique other pair of centrally symmetric chords that can be added to form a triangulation. Two vertices of $\mathfrak d_n$ are adjacent if one can be obtained from the other via a flip. \cref{fig:typeD} shows $\mathfrak d_3$. 

\begin{figure}
    \centering
    \includegraphics[width = .5\textwidth]{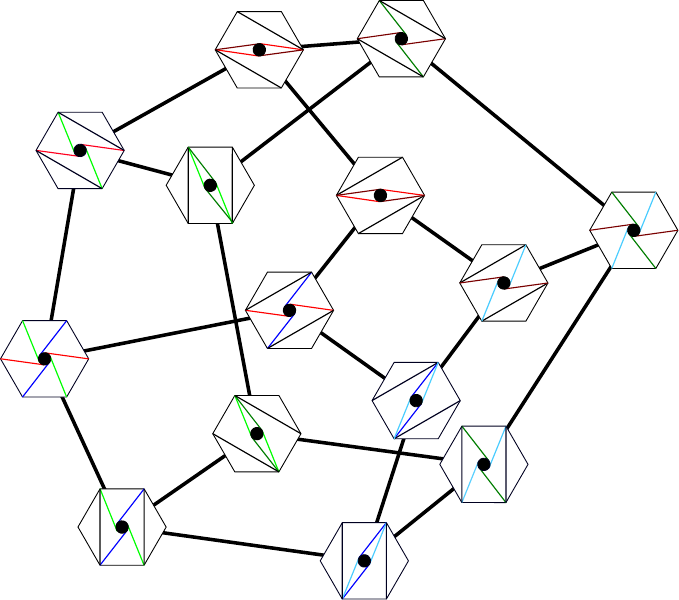}
    \caption{The graph $\mathfrak d_3$. We have color-coded the centrally symmetric pairs of central chords. }
    \label{fig:typeD}
\end{figure}

The central chords of $P_{2n}^\bullet$ come in centrally symmetric pairs; for each such pair $\dd$ of $P_{2n}^\bullet$, let $\Psi(\dd)$ be the set of triangulations of $P_{2n}^\bullet$ that use $\dd$. Every triangulation of $P_{2n}^\bullet$ contains at least one pair of centrally symmetric central chords, so the union of these classes of the form $\Psi(\dd)$ is the entire vertex set of $\mathfrak d_n$. Note that there exist distinct pairs $\dd$ and $\dd'$ such that $\Psi(\dd)\cap\Psi(\dd')$ is nonempty.

Suppose $\rr$ is a pair of opposite vertices of $P_{2n}^\bullet$. Let $\clock\rr$ (respectively, $\counter\rr$) be the pair of centrally symmetric central chords that use the vertices in $\rr$ and emanate from these vertices in the clockwise (respectively, counterclockwise) direction. (See \cref{fig:clockcounter}.) 

\begin{figure}
    \centering
    \includegraphics[height=1.88cm]{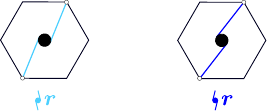}
    \caption{The pairs $\clock\rr$ and $\counter\rr$ of central chords in $P_{6}^\bullet$, where $\rr$ is the pair of vertices marked with small circles.}  
    \label{fig:clockcounter}
\end{figure}

\begin{lemma}\label{lem:TypeDClasses}
Let $\rr$ be a pair of opposite vertices of $P_{2n}^\bullet$. Then $\Psi(\clock\rr)$ and $\Psi(\counter\rr)$ are both isomorphic to $\mathfrak a_{n-1}$, and $\Psi(\clock\rr)\cap\Psi(\counter\rr)$ is isomorphic to $\mathfrak a_{n-2}$. 
\end{lemma}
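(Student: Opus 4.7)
The plan is to exploit central symmetry to reduce each of $\Psi(\clock\rr)$, $\Psi(\counter\rr)$, and $\Psi(\clock\rr)\cap\Psi(\counter\rr)$ to triangulations of an auxiliary convex polygon, and then match the resulting polygon with the one whose triangulations form $\mathfrak{a}_{n-1}$ or $\mathfrak{a}_{n-2}$. The point is that a centrally symmetric triangulation lying in one of these classes is determined by its restriction to a single fundamental region of $P_{2n}^\bullet$ cut out by the fixed central-chord pairs.

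For $\Psi(\clock\rr)$, the two tangent chords in $\clock\rr$ split the annulus $P_{2n}^\bullet$ into two regions swapped by $180^\circ$-rotation; pick one such region $\mathcal{R}$. Its boundary consists of the $n$ polygon edges of a polygon arc $v_1,\dots,v_{n+1}$ (from one vertex of $\rr$ to the other), the two spokes of $\clock\rr$, and a single disc arc joining their tangent points. The chords available inside $\mathcal{R}$ are (i) non-central diagonals of $P_{2n}$ between two of the $v_i$, and (ii) tangent chords from some $v_i$ whose tangent point lies on the disc arc of $\mathcal{R}$; each such $v_i$ has exactly one such tangent chord, since the other emanates into the opposite region. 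Collapsing the disc arc of $\mathcal{R}$ to a single virtual vertex $\star$ turns $\mathcal{R}$ into a convex $(n+2)$-gon with vertices $v_1,\dots,v_{n+1},\star$: type (i) chords become $v_iv_j$-diagonals, type (ii) chords become $v_i\star$-diagonals, and the two fixed spokes become polygon sides $\star v_1$ and $v_{n+1}\star$. A routine non-crossing check then establishes a bijection between triangulations of $\mathcal{R}$ and triangulations of the $(n+2)$-gon, with flips in $\Psi(\clock\rr)$ matching diagonal flips, yielding $\Psi(\clock\rr)\cong \mathfrak{a}_{n-1}$. The argument for $\Psi(\counter\rr)$ is identical.

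For $\Psi(\clock\rr)\cap\Psi(\counter\rr)$, the four fixed spokes decompose $P_{2n}^\bullet$ into four pieces: two congruent ``side'' regions swapped by $180^\circ$-rotation, plus two small ``cap'' regions adjacent to the two vertices of $\rr$; each cap contains no polygon vertex in its interior and admits no additional chord. Focus on one side region $\mathcal{R}^*$; its polygon-arc boundary is still $v_1,\dots,v_{n+1}$, but its disc-arc boundary has been pinched into a very short arc on one side of the disc, bounded by two of the four tangent points. The main geometric step is to show that no tangent chord from an interior vertex $v_i$ ($2\le i\le n$) can lie in $\mathcal{R}^*$: the tangent points from such a $v_i$ lie outside this short disc arc, so the corresponding tangent chord must cross one of the four spokes. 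Concretely, the tangent points of a polygon vertex at angular position $\theta$ sit at disc-angles $\theta\pm\arccos(\varepsilon/R)$, and the assumption that $\varepsilon$ is chosen sufficiently small guarantees these lie outside the $O(\varepsilon/R)$-wide window bounding the disc arc of $\mathcal{R}^*$ unless $\theta=\pm\pi/2$. Consequently the only additional chords in $\mathcal{R}^*$ are non-central diagonals among $v_1,\dots,v_{n+1}$, excluding $v_1v_{n+1}$ which is a central diagonal of $P_{2n}$ through the disc.

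Collapsing the two spokes and the disc arc of $\mathcal{R}^*$ into a single virtual side from $v_{n+1}$ to $v_1$ turns $\mathcal{R}^*$ into a convex $(n+1)$-gon with vertices $v_1,\dots,v_{n+1}$, in which $v_1v_{n+1}$ is now simply a polygon side (hence no longer a forbidden diagonal). Triangulations of this $(n+1)$-gon biject with elements of $\Psi(\clock\rr)\cap\Psi(\counter\rr)$, and flips of non-fixed chord pairs in $\mathfrak{d}_n$ correspond to diagonal flips in the $(n+1)$-gon, yielding $\Psi(\clock\rr)\cap\Psi(\counter\rr)\cong \mathfrak{a}_{n-2}$. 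I expect the main obstacle to be the geometric verification that no tangent chord from an interior vertex can lie in $\mathcal{R}^*$; this is precisely what distinguishes the intersection case (corresponding to $\mathfrak{a}_{n-2}$) from the single-class case (corresponding to $\mathfrak{a}_{n-1}$), and without it one would mistakenly collapse to an $(n+2)$-gon and recover $\mathfrak{a}_{n-1}$ a second time.
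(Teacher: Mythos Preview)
Your argument for $\Psi(\clock\rr)\cong\mathfrak a_{n-1}$ is essentially the paper's: collapse the disc arc of the fundamental region to a single vertex $\star$ and read off an $(n+2)$-gon. The paper phrases this as ``shrink $\bullet$ to a vertex and delete half of $P_{2n}$,'' which is the same construction.

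Where you diverge is in the treatment of the intersection $\Psi(\clock\rr)\cap\Psi(\counter\rr)$. You start over with a fresh four-spoke decomposition of $P_{2n}^\bullet$, isolate a side region $\mathcal R^*$, and then carry out a genuine geometric computation (the $\theta\pm\arccos(\varepsilon/R)$ argument) to rule out tangent chords from interior vertices. This is correct, and your identification of that verification as the crux is accurate. The paper, however, avoids all of this by reusing the isomorphism already built: under the map $\Psi(\clock\rr)\to\mathfrak a_{n-1}$, a triangulation also contains $\counter\rr$ if and only if its image in the $(n+2)$-gon uses the diagonal $v_1v_{n+1}$ joining the two vertices of $\rr$. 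Fixing that diagonal cuts off the triangle $v_1v_{n+1}\star$ and leaves an $(n+1)$-gon, so the intersection is immediately $\mathfrak a_{n-2}$ with no separate tangent-point analysis needed. Your route buys a self-contained picture of the intersection as a region in its own right; the paper's route buys brevity and sidesteps the $\varepsilon$-dependent geometry entirely.
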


\begin{proof}
Note first that $\Psi(\clock\rr)$ and $\Psi(\counter\rr)$ are isomorphic to each other. We obtain an $(n+2)$-gon from $P_{2n}^\bullet$ by shrinking the disc $\bullet$ into a vertex and then deleting half of the polygon $P_{2n}$. The isomorphism between $\Psi(\clock\rr)$ and $\mathfrak a_{n-1}$ is illustrated in \cref{fig:isomorphism} for $n=6$. A triangulation in $\Psi(\clock\rr)$ uses $\counter\rr$ (i.e., is in $\Psi(\clock\rr)\cap\Psi(\counter\rr)$) if and only if the corresponding triangulation of the $(n+2)$-gon uses the diagonal connecting the two vertices in $\rr$. This yields the desired isomorphism between $\Psi(\clock\rr)\cap\Psi(\counter\rr)$ and $\mathfrak a_{n-2}$. 
\end{proof}

\begin{figure}
    \centering
    \includegraphics[height=8.09cm]{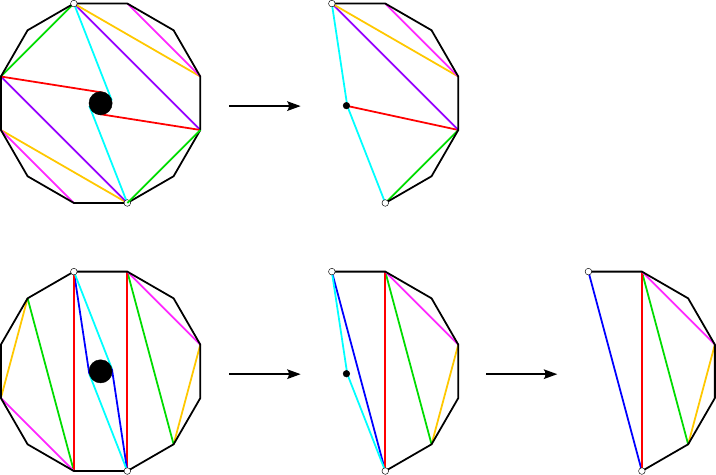}
    \caption{An illustration of \cref{lem:TypeDClasses} for $n=6$. The top and bottom images are two examples of the isomorphism between $\Psi(\clock\rr)$ and $\mathfrak a_{n-1}=\mathfrak a_5$. The image on the bottom illustrates the restriction of this isomorphism to $\Psi(\clock\rr)\cap\Psi(\counter\rr)$, which is an isomorphism from $\Psi(\clock\rr)\cap\Psi(\counter\rr)$ to $\mathfrak a_{n-2}=\mathfrak a_4$. }  
    \label{fig:isomorphism}
\end{figure}

\begin{proof}[Proof of \cref{thm:typeD}]

Let $\dd_1,\ldots,\dd_{2n}$ be the centrally symmetric pairs of central chords of $P_{2n}^\bullet$, and let $\mathcal C_i=\Psi(\dd_i)$. We apply \Cref{corollary} with the classes $\mathcal C_1,\ldots,\mathcal C_{2n}$. (We are setting $k=2n$ in \Cref{corollary}.) \cref{lem:TypeDClasses} tells us that each class $\mathcal C_i$ is isomorphic to $\mathfrak a_{n-1}$, so we know that it contains a multicommodity flow with congestion $O(\sqrt n \log n)$ by \cref{lem:eppfrishtri}. 

We apply \cref{corollary} with $\mathcal{C}_i$ equal to some $\Psi(\clock \rr)$ for $1 \leq i \leq n$ and to some $\Psi(\counter \rr)$ for $ n + 1 \leq i \leq 2n$. To compute~$\mathcal{E}_{\min}$ (used in \cref{corollary}), we need to compute the sizes of the intersections of each pair of classes. We have
\[\mathcal{E}_{\min} = \min\left\{\min_{\rr} |\Psi(\clock \rr) \cap \Psi(\counter \rr)|, \min_{\boldsymbol{q},\rr} |\Psi(\clock \boldsymbol{q}) \cap \Psi(\clock \rr)|, \min_{\boldsymbol{q}, \rr} |\Psi(\counter \boldsymbol{q}) \cap \Psi(\counter \rr)|\right\}\]
since the only nontrivial intersections between classes occur between pairs of the form $\Psi(\clock \rr), \Psi(\counter \rr)$, $\Psi(\clock \boldsymbol{q}), \Psi(\clock \rr)$, or $\Psi(\counter \boldsymbol{q}), \Psi(\counter \rr)$. We know by \cref{lem:TypeDClasses} that 
$ |\Psi(\clock \rr) \cap \Psi(\counter \rr)| = C_{n-1}$. Moreover, for distinct pairs $\boldsymbol{q},\rr$ of oppositie vertices of $P_{2n}^\bullet$, it follows from the isomorphism used to prove \cref{lem:TypeDClasses} that 
\[
    |\Psi(\clock q) \cap \Psi(\clock r)|= C_{i}C_{j}
\]
for some $i,j$ with $i + j = n$. By symmetry, we have $|\Psi(\clock r) \cap \Psi(\clock q)| = |\Psi(\counter r) \cap \Psi(\counter q)|$. By the same reasoning used to derive \eqref{eq:emincc}, we have
\[|\Psi(\clock r) \cap \Psi(\clock q)| = |\Psi(\counter r) \cap \Psi(\counter q)|\geq C_{\lfloor n/2\rfloor - 1}C_{\lceil n/2\rceil - 1} = \Omega(n^{-3}4^n).\] Consequently,
\[
\mathcal{E}_{\min} \geq \min\{C_{n-2}, \Omega(n^{-3}4^n)\} = \Omega(n^{-3}4^n)
.\]
Since $|V(\mathfrak{d}_n)| = \frac{3n-2}{n}\binom{2n-2}{n-1} = \Theta(\frac{1}{\sqrt n}4^n)$, this implies that
\begin{equation}
    \mathcal{E}_{\min} \geq \Omega(n^{-5/2})|V(\mathfrak{d}_n)|.
    \label{eq:emind}
\end{equation}
Applying \cref{corollary}, we find that a multicommodity flow exists in~$\mathfrak{d}_n$ with congestion at most
\begin{equation}
\left(\frac{2\gamma |V|}{\mathcal{E}_{\min}}+1\right)k\rho \leq
O(n^{5/2} \cdot n^2\cdot \sqrt n \log n) = O(n^5 \log n),
\end{equation}
where the term $O(n^{5/2})$ comes from \eqref{eq:emind}, the term $O(n^2)$ comes from the observation that~$\gamma$ and~$k$ are both $O(n)$, and the term~$\rho = O(\sqrt n \log n)$ comes from applying \cref{lem:eppfrishtri} to each class. 
\end{proof}

\section{Discussion of Methods and Prior Work}
We have used the multicommodity flow construction of~\cite{eppstein2022improved} to obtain new rapid (polynomial) mixing results for the natural random walk on the type-$B$ and type-$D$ associahedra. Our expansion bound for the type-$B$ associahedron is nearly tight (up to logarithmic factors). We observe, however, that for the type-$B$ associahedron, we could have combined our decomposition technique with prior work that predates Eppstein and Frishberg~\cite{eppstein2022improved}. That is, a coarse polynomial bound for the type-$B$ associahedron could be obtained from prior results establishing $O(n^5 \log n)$ mixing for the type-$A$ associahedron. To apply the $O(n^5 \log n)$ bound for type $A$ to the type-$B$ associahedron, one can use a pre-existing analogue of \cref{lem:flowdecomp} that uses the \emph{spectral gap} of the chain. The theorem, due to Jerrum, Son, Tetali, and Vigoda~\cite{jstv}, with some additional work, gives a mixing time bound of $O(n^{6.5}\log^2 n)$ when combined with our decomposition. When applied to Eppstein and Frishberg's $O(n^3 \log^3 n)$ result, the theorem gives $O(n^{4.5} \log^3 n)$. However, we needed the flow machinery in in~\cite{eppstein2022improved} to obtain the tight expansion bound and corresponding $O(n^3 \log^3 n)$ mixing bound.

For the type-$D$ associahedron, we are not aware of a prior decomposition framework that can be applied in a black-box fashion to obtain polynomial mixing, without the modifications due to \cite{eppstein2021rapid} that we have stated in \cref{corollary}.

\bibliographystyle{plainurl}
\bibliography{references}

\appendix
\newpage
\section{A Nearly Tight Lower Expansion Bound for Type $B$}
\subsection{Setup and MSFs}\label{sec:setup}
In this section, we will do a more careful analysis in order to derive an expansion lower bound that matches the upper bound up to logarithmic factors. More specifically, we prove \cref{thm:typeB_lower} by constructing a (uniform) multicommodity flow in $\mathfrak{b}_n = (V, E)$ (the type-$B$ associahedron over the $2n$-gon): one in which every triangulation in $V$ sends a unit of a commodity to every other triangulation in $V$. Consider the partitioning into classes $V = \bigcup_D \Psi(D)$ in \cref{sec:upper}. All of the classes are isomorphic to each other and to a type-$A$ associahedron $\mathfrak{a}_n$.

\cref{lem:eppfrishtri} implies that any two  triangulations~$s,t$ in a given class $\Psi(D)$ can exchange a unit of flow while generating congestion at most~$O(\sqrt n \log n)$\textemdash actually, when considering normalization factors in the definition of congestion, it is at most~$\frac{|\Psi(D)|}{|V|}\cdot O(\sqrt{n}\log n)=O(\log n/\sqrt n)$ .

The harder part is to find the paths through which a triangulation $s\in \Psi(D)$ sends flow to a triangulation $t \in \Psi(D')$. We follow the main steps of~\cite{eppstein2022improved} in constructing this flow:
\begin{enumerate}[(i)]
\item ``Shuffle'' the total flow being sent by $s\in\Psi(D)$ to other triangulations, by distributing an equal amount of this outbound flow to each triangulation~$s' \in \Psi(D)$.
\item ``Concentrate'' this outbound flow within the \emph{boundary set} $\partial_{D'}(D)$\textemdash the set of triangulations in~$\Psi(D)$ having a neighbor in~$\Psi(D')$.
\item ``Transmit'' the flow that is bound for triangulations in~$\Psi(D')$ across the matching $\mathcal{E}(D, D')$.
\item ``Distribute'' the incoming flow from the matching $\mathcal{E}(D, D')$ (flow that originated at~$s\in\Psi(D)$) uniformly throughout~$\Psi(D')$.
\end{enumerate}
Formally, this flow is defined as a composition of \emph{multi-way single-commodity flows} (\emph{MSFs}). An \emph{MSF problem} (defined in~\cite{eppstein2022improved}) is a tuple
$$(\sigma, \delta, S\subseteq V, F\subseteq V),$$
where~$S$ is a \emph{source set} of vertices each of which has $\sigma$ units of flow that it must send out, and $F$ is a \emph{sink set} of vertices each of which has~$\delta$ units of flow that it must receive. There is only a single commodity to be transmitted, unlike in a multicommodity flow.  
When~$\delta$ is undefined, we assume~$\delta = |S|\sigma/|F|.$ We may also define~$\sigma: S \rightarrow \mathbb{R}_{\geq 0}$ and $\delta: F \rightarrow \mathbb{R}_{\geq 0}$ as functions, in which case each $v \in S$ (respectively, each $u \in F$) must send out $\sigma(v)$ (respectively, must receive $\delta(u)$) units of flow.

An \dfn{MSF} is a flow function~$\phi$ that ``solves'' an MSF problem; that is, the net flow out of each vertex in~$S \setminus F$ is~$\sigma$, the net flow into each vertex in~$F \setminus S$ is~$\delta$, the net flow out of each vertex in~$S \cap F$ is~$\sigma - \delta$, and the net flow out of each vertex in~$V \setminus (S \cup F)$ is zero.

\subsection{Solving MSFs}
\label{sec:steps}
We can view the shuffling step of sending flow from a given~$s$ as an MSF problem
$$\left(\sigma = |V|, \delta = \frac{|V|}{|\Psi(D)|}, S = \{s\}, F = \Psi(D)\right).$$
Thus, the overall shuffling step consists of~$|\Psi(D)|$ separate MSFs, constructed via the uniform multicommodity flow described above by \cref{lem:eppfrishtri}\textemdash since each class is isomorphic to a type-$A$ associahedron. The advantage of the shuffling step is that the remaining steps can be handled in aggregate, with a single commodity (MSF), for all~$s \in \Psi(D)$\textemdash since the sink set~$F$ and demand~$\delta$ of the shuffling step are identical for all flows from source vertices~$s$. 

The ``transmission'' step is trivial to define, and we do so in aggregate: let each edge in~$\mathcal{E}(D, D')$ carry an equal amount of flow, namely~$\frac{|\Psi(D)||\Psi(D')|}{|\mathcal{E}(D, D')|}$.
In other words, we have
$$\left(\sigma = \delta = \frac{|\Psi(D)||\Psi(D')|}{|\mathcal{E}(D, D')|}, S = \partial_{D'}(D), F = \partial_D(D')\right),$$
where $\partial_{D'}(D)$ denotes the set of vertices in $\Psi(D)$ that have at least one neighbor in $\Psi(D')$. 

The ``distribution'' step and the ``concentration'' step are the same up to symmetry, so it suffices to define the distribution step and bound its congestion: consider a class $\Psi(D)$, which is isomorphic to a type-$A$ associahedron~$\mathfrak{a}_n$, and which has received $\frac{|\Psi(D)||\Psi(D')|}{|\mathcal{E}(D, D')|}$ across each edge in~$\mathcal{E}(D, D')$ from each~$D'$.

We have, in the distribution step, an MSF problem
$$\left(\sigma = \frac{|\Psi(D)||\Psi(D')|}{|\mathcal{E}(D, D')|}, \delta = |\Psi(D')|, S = \partial_{D'}(D), F = \Psi(D)\right)$$
for each~$D'$; that is, the source set is~$\partial_D(D')$, and each triangulation in this source set is ``carrying''~$\sigma = \frac{|\Psi(D)||\Psi(D')|}{|\mathcal{E}(D, D')|}$ flow (which originated at vertices in~$\Psi(D')$).

\subsection{Solving the distribution MSFs: $O(n)$ overall congestion}
\label{sec:oriented}
We analyze the boundary set~$\partial_D(D')$: Eppstein and Frishberg~\cite{eppstein2022improved} observed that one can decompose the state space of~$\mathfrak{a}_n$ in a different way: orient the $n$-gon with an arbitrarily labeled ``special'' polygon edge~$e^*$ on the bottom. Every triangulation includes~$e^*$ since~$e^*$ is an edge of the polygon. Furthermore, every triangulation includes exactly one triangle~$T$ that has~$e^*$ as a side: partition the triangulations in~$\mathfrak{a}_n$ into a collection of classes~$\hat\Psi(T)$, where each~$T$ is a triangle having~$e^*$ as an edge, and a triangulation~$t$ belongs to class~$\hat\Psi(T)$ if~$t$ includes the triangle~$T$.

\begin{lemma}[\cite{eppstein2022improvedfull}, Lemma 38]
    \label{lem:logtrick}
    Consider the type-$A$ associahedron~$\mathfrak{a}_n$ oriented with a special bottom edge~$e^*$. Consider the triangles~$T_1, T_2, \dots, T_n$ that include~$e^*$ as an edge (labeled in clockwise order according to their topmost vertex).
    
    Consider~$n$ MSF problems, each of the form
    $$(\sigma_i, S = \hat\Psi(T_i), F = V(\mathfrak{a}_n)),$$
    and suppose that $\sigma_i \leq |V(\mathfrak{a}_n)|$ for all $i$. Then this collection of MSF problems can be solved with total congestion~$O(\sqrt n)$ within~$\mathfrak{a}_n$.
\end{lemma}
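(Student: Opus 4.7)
The plan is to construct solutions to the $n$ MSF problems simultaneously, exploiting the fact that the source classes $\hat\Psi(T_1), \dots, \hat\Psi(T_n)$ partition $V(\mathfrak{a}_n)$ and share the same sink set. For each MSF $i$, flow must be dispersed from $\hat\Psi(T_i)$ to every vertex of $\mathfrak{a}_n$, and the total congestion over all $n$ problems should be $O(\sqrt{n})$. The disjointness of the sources, together with the uniformity of the sinks, is what should allow us to amortize a $\log n$ factor that would otherwise arise from applying \cref{lem:eppfrishtri} independently to each problem.

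The first step is to exploit the product structure $\hat\Psi(T_i)\cong\mathfrak{a}_{i-1}\times\mathfrak{a}_{n-i-1}$: intra-class routing inside $\hat\Psi(T_i)$ reduces to routing in the two smaller associahedra, where \cref{lem:eppfrishtri} furnishes a uniform multicommodity flow of congestion $O(\sqrt{i}\log i)$ or $O(\sqrt{n-i}\log(n-i))$. Using this, I would build three-phase routes for each MSF $i$: concentrate the per-source flow within $\hat\Psi(T_i)$ toward boundary vertices that are adjacent to other classes; transmit across boundary matchings induced by flips of the triangle on $e^*$; and finally distribute the incoming flow uniformly inside each receiving class $\hat\Psi(T_j)$, again via the intra-class flow.

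The key accounting, and the source of the ``log trick,'' is that the classes $\hat\Psi(T_i)$ partition $V(\mathfrak{a}_n)$, so each edge of $\mathfrak{a}_n$ lies strictly inside at most one class. Consequently the intra-class contributions to congestion from different MSFs never collide on a shared edge, and summing $\max_e \phi_i(e)$ over $i$ is governed by the weighted class sizes $|\hat\Psi(T_i)|$, which telescope to $|V(\mathfrak{a}_n)|$. Combined with $\sigma_i \leq |V(\mathfrak{a}_n)|$ and the Catalan-ratio bound $|\hat\Psi(T_i)|/|V(\mathfrak{a}_n)| = O(i^{-3/2}(n-i)^{-3/2}n^{3/2})$, which suppresses contributions from balanced splits, this weighted sum should come out to $O(\sqrt{n})$ rather than $O(\sqrt{n}\log n)$.

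The principal obstacle is the inter-class transmission step: one must simultaneously bound the load on each boundary edge coming from every MSF. For each ordered pair $(i,j)$, the matching $\mathcal{E}(T_i,T_j)$ induced by flipping one side of $T_i$ has a size that depends delicately on how many vertices of $\hat\Psi(T_i)$ admit such a flip landing in $\hat\Psi(T_j)$; in some cases this set is small, requiring flow destined for $\hat\Psi(T_j)$ to reroute through intermediate classes $\hat\Psi(T_k)$ via canonical flip sequences. Controlling the compounding of these reroutings across all $n$ MSFs, while preserving the $O(\sqrt{n})$ total budget, is the technical heart of the argument and is where the explicit combinatorics of flips on $e^*$ from the Eppstein--Frishberg construction must be invoked.
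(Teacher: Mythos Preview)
The paper does not prove this lemma; it is quoted from \cite{eppstein2022improvedfull}. However, the paper reproduces the same mechanism in its own \cref{lem:logtricktoplvl}, so one can read off what the intended argument is: a \emph{hierarchical (dyadic) grouping} of the classes into contiguous blocks of sizes $2,4,8,\dots$, where at level $k$ each pair of classes in a block exchanges flow across their matching, producing congestion $O(\sqrt{2^k})$ on matching edges and inducing an internal MSF in each class solvable with congestion $O(\sqrt{2^k})$ via \cref{lem:singlemsf}. Summing the geometric series over the $O(\log n)$ levels yields $O(\sqrt n)$. Your proposal never invokes this dyadic grouping, nor the single-MSF bound of \cref{lem:singlemsf}; instead you rely on \cref{lem:eppfrishtri} inside each class, which already costs $O(\sqrt n\log n)$ and cannot give the sharper $O(\sqrt n)$.

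There is also a concrete error in your accounting. You claim that because the classes partition $V(\mathfrak a_n)$, ``intra-class contributions to congestion from different MSFs never collide on a shared edge.'' This is true only for the \emph{concentration} phase of MSF $i$, which lives entirely in $\hat\Psi(T_i)$. But the \emph{distribution} phase of MSF $i$ takes place inside \emph{every} class $\hat\Psi(T_j)$, so an intra-class edge of $\hat\Psi(T_j)$ carries distribution load from all $n$ MSFs simultaneously; disjointness of the source classes buys nothing here. Your final paragraph correctly identifies the inter-class transmission as the hard step, but ``canonical flip sequences'' and ``controlling the compounding of reroutings'' are placeholders rather than a mechanism. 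The missing idea is precisely the dyadic block structure, which localizes the exchange at level $k$ to classes at distance $\leq 2^k$ (so $|\mathcal E(T_i,T_j)|$ is large relative to the flow being pushed) and replaces $n$ separate distribution MSFs with a \emph{single} MSF per class per level, to which the congestion-$1$ bound of \cref{lem:singlemsf} applies.
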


Now fix~$D$, and consider any diagonal~$D_i$, where~$i$ is a vertex on the~$2n$-gon and~$D_i$ is the central diagonal of the~$2n$-gon having~$i$ as an endpoint. The boundary set~$\partial_{D_i}(D)$ can be identified with a class~$\hat\Psi(T_i)$ in the type-$A$ associahedron~$\Psi(D) \simeq \mathfrak{a}_n$, where the triangle~$T_i$ is found by setting~$e^*$ to the diagonal~$D$, setting the third vertex of triangle~$T_i$ to the vertex~$i$. Furthermore, we have
$$\sigma_i = \frac{|\Psi(D)||\Psi(D_i)|}{|\mathcal{E}(D, D_i)|} \leq O(\sqrt n)|V(\mathfrak{b}_n)|$$
by \cref{lem:EDD'} and by the fact that~$|\Psi(D)| = |\Psi(D_i)| = |V(\mathfrak{b}_n)|/n$. 

Combining these observations with \cref{lem:logtrick}, we conclude that the concentration and distribution MSF problems described in this section can be solved while producing~$O(n)$ total congestion in a given class~$\Psi(D)$.
Adding this congestion to the~$O(\sqrt n)$ congestion produced by the transmission step and the $O(\sqrt n \log n)$ congestion in the shuffling step gives a flow with overall congestion~$O(n)$ in $\mathfrak{b}_n$, leading to expansion~$\Omega(1/n)$.

\subsection{Tightening the congestion bound}
In this section, we obtain the nearly tight congestion bound in \cref{thm:typeB_lower} by following another technique from~\cite[Lemma 41]{eppstein2022improvedfull}. This improves the concentration and transmission steps in \cref{sec:setup}.

We will need the following additional lemma. 
\begin{lemma}
\label{lem:singlemsf}
Consider an MSF problem in $\mathfrak{a}_n$ in which the source set~$S$ is a union of classes $\bigcup_i \hat\Psi(T_i)$, the sink set~$F$ is all of~$V(\mathfrak{a}_n)$, and the surplus function~$\sigma$ is everywhere at most~$|V(\mathfrak{a}_n)|$ and is uniform over any given $\hat\Psi(T_i)$. Any such MSF can be solved with congestion at most $1$ in~$\mathfrak{a}_n$. 
\end{lemma}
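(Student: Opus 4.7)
The plan is to adapt the argument of \cite[Lemma 41]{eppstein2022improvedfull}. The core reason the congestion bound improves from the $O(\sqrt n)$ of \cref{lem:logtrick} to $1$ is that \cref{lem:singlemsf} asks for a \emph{single} MSF, so contributions originating in distinct source classes $\hat\Psi(T_i)$ can cancel on shared edges, whereas in \cref{lem:logtrick} one had to sum $n$ independent flows whose loads add.

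My plan is to build the MSF by choosing a canonical path $p_{s,t}$ for each pair $(s,t)$ with $s$ in some source class $\hat\Psi(T_i)$ and $t \in V(\mathfrak{a}_n)$, and sending $\sigma_i/|V(\mathfrak{a}_n)|$ units along $p_{s,t}$. Since $\sigma$ is uniform on each $\hat\Psi(T_i)$, the weight along $p_{s,t}$ depends on $s$ only through its class, which makes the routing symmetric within each class. I would decompose each $p_{s,t}$ into three stages: an intra-class shuffle in $\hat\Psi(T_i)$ to a canonical boundary triangulation, a traversal across inter-class edges that projects onto a path in the ``triangle graph'' $T_1\text{--}T_2\text{--}\cdots\text{--}T_n$, and an intra-class distribution within the terminal class to $t$.

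With these canonical paths in place, I would bound the edge flow in two cases. On an inter-class edge between $\hat\Psi(T_i)$ and $\hat\Psi(T_{i+1})$, the load is controlled by the number of pairs $(s,t)$ whose route crosses the boundary, divided by the large number of matching edges between the two classes (estimated by a Catalan-number computation analogous to \eqref{eq:emincc}); the hypothesis $\sigma\le|V(\mathfrak{a}_n)|$ then keeps the load below $|V(\mathfrak{a}_n)|$, which is congestion $1$ in the MSF normalization. For intra-class edges, I would use the fact that each class $\hat\Psi(T_i)$ decomposes as a product of two smaller associahedra and invoke \cref{lem:eppfrishtri} on each factor to handle the intra-class portion uniformly; aggregating this into a single MSF rather than the $n$ separate ones of \cref{lem:logtrick} is precisely what removes the $\sqrt{n}$ factor.

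The main obstacle is the bookkeeping needed to ensure that the cross-class cancellations yield the sharp constant $1$. When canonical paths from two different source classes would otherwise pile up on a shared edge in the same direction, the scheme must instead arrange that their contributions flow in opposite directions so that they cancel, exploiting the uniformity of $\sigma$ on each $\hat\Psi(T_i)$ together with the bound $\sigma\le|V(\mathfrak{a}_n)|$. Making this cancellation explicit, and verifying that no edge receives net load exceeding $|V(\mathfrak{a}_n)|$, is the core technique of \cite[Lemma 41]{eppstein2022improvedfull} that I would transpose, in detail, to the present setting.
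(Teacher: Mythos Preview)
Your proposal has a real structural gap. The paper does not route along canonical paths, nor does it route through a linear ``triangle graph'' $T_1\text{--}T_2\text{--}\cdots\text{--}T_n$; instead it sends flow \emph{directly} across every boundary $\mathcal{E}(T_i,T_j)$, in the amount $\frac{(\sigma(T_i)-\sigma(T_j))\,|\hat\Psi(T_i)|\,|\hat\Psi(T_j)|}{|V(\mathfrak{a}_n)|}$ whenever $\sigma(T_i)>\sigma(T_j)$. The cancellation you are hoping for is built into this difference formula: because the problem is single-commodity, only the net imbalance between classes has to move, and that net imbalance is bounded by $|V(\mathfrak{a}_n)|$ per edge precisely because $\sigma\le |V(\mathfrak{a}_n)|$. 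Your canonical-path scheme, by contrast, fixes directed routes in advance and then tries to argue post hoc that opposing contributions cancel; with fixed paths there is no mechanism forcing cancellation, and routing everything through a path graph on the $T_i$ concentrates load on the middle boundaries rather than spreading it.

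The second gap is the intra-class step. You invoke \cref{lem:eppfrishtri} inside each $\hat\Psi(T_i)$, which only gives $O(\sqrt n\log n)$ congestion, and then assert that ``aggregating into a single MSF removes the $\sqrt n$ factor.'' That assertion is exactly the content of the lemma you are trying to prove, so the argument is circular. The paper avoids this by observing that the flow sent across the top-level boundaries induces, inside each $\hat\Psi(T_i)$, a new MSF problem of the \emph{same form} (source set a union of boundary classes, surplus uniform on each, bounded by the vertex count) on a smaller associahedron; the bound then follows by recursion, with no blow-up at any level. Without that recursive self-reduction, there is no way to get the intra-class congestion down to $1$.
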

\begin{proof}
    This follows from tracing the proof of \cite[Lemma 23]{eppstein2022improvedfull} with a single commodity (MSF) instead of a multicommodity flow. The extra factor of~$\kappa = \Theta(n)$ otherwise incurred in that proof is seen to vanish when considering an MSF using the observations about the concentration flow at the end of the proof. An earlier version of their paper (\cite[Lemma A.10]{eppstein2022improvedv1}) contained the precise decomposition. For completeness, we summarize this construction. 

    The MSF problem is of the form
    $$(\sigma, \delta, S = \bigcup_i \hat\Psi(T_i), F = V(\mathfrak{a}_n)),$$
    where $\sigma(t) = \sigma(t')$ whenever $t,t'$ are in the same class~$\hat\Psi(T_i)$, but where $\sigma$ may differ across classes $\hat\Psi(T_i), \hat\Psi(T_j)$\textemdash and where~$\delta$ is uniform. Write $\sigma(T_i)$ to denote the value of $\sigma$ at each $t \in \hat\Psi(T_i)$.
    
    We will solve this MSF problem with an MSF~$\phi$ as follows: since every pair $\hat\Psi(T_i), \hat\Psi(T_j)$ shares a nonempty edge set $\mathcal{E}(T_i, T_j)$, where $\mathcal{E}(\cdot, \cdot)$ is as defined in \cref{sec:typeb}, let~$\phi$ send $\frac{(\sigma(T_i) - \sigma(T_j))|\hat\Psi(T_i)||\hat\Psi(T_j)|}{|V(\mathfrak{a}_n)|}$ flow across the boundary $\mathcal{E}(\hat\Psi(T_i), \hat\Psi(T_j))$ for each ordered pair $(T_i, T_j)$ such that $\sigma(T_i) > \sigma(T_j)$. The resulting amount of flow is proportionally distributed among the classes according to their cardinalities, as required by the fact that $\delta$ is uniform. Furthermore, the function~$\phi$ as defined over the edges between $\hat\psi(T_i)$ subgraphs induces recursive distribution and concentration subproblems within each~$\hat\psi(T_i)$. That is, each~$\hat\psi(T_i)$ either receives or sends flow to or from each~$\hat\psi(T_j)$ for $j \neq i$. The proof of \cite[Lemma A.10]{eppstein2022improvedv1} showed that the incoming or outgoing flow across each boundary edge is sufficiently small that these subproblems are of the same form as the original problem and can be solved recursively with no increase in congestion. That is, the overall congestion is~$1$.
\end{proof}

\begin{lemma}
\label{lem:logtricktoplvl}
Consider a collection of MSFs of the form
$$(\sigma = |V(\mathfrak{b}_n)|, S = \Psi(D_i), F = V(\mathfrak{b}_n))$$ (so each class~$\Psi(D_i)$ must distribute its outbound flow throughout~$V(\mathfrak{b}_n)$).
This collection of MSFs can be solved while generating congestion at most~$O(\sqrt n)$.
\end{lemma}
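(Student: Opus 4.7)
My plan is to solve the $n+1$ MSFs jointly by decomposing each into the four stages of \cref{sec:setup}---shuffle, concentrate, transmit, distribute---and bounding the aggregate per-edge congestion contributed by each stage. The shuffling stage can be omitted for these top-level MSFs because the source set is already the entire class $\Psi(D_i)$. Most of the work therefore lies in arranging the concentration and distribution stages so that \cref{lem:logtrick} can be applied despite non-uniform per-sink demands.

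First I would handle transmission. For each pair of distinct central diagonals $D_i,D_j$, I route $|\Psi(D_i)|\,|\Psi(D_j)|/|\mathcal{E}(D_i,D_j)|$ units of flow uniformly across every edge of the matching $\mathcal{E}(D_i,D_j)$. By \cref{lem:EDD'}, this yields raw per-edge flow of order $|V(\mathfrak{a}_{n-1})|\,d(D_i,D_j)^{3/2}$, which is $O(|V(\mathfrak{b}_n)|\sqrt n)$ in the worst case $d=\Theta(n)$. Normalizing by $|V(\mathfrak{b}_n)|$, the transmission stage contributes $O(\sqrt n)$ congestion per matching edge.

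Next, for concentration and distribution, I fix a source class $\Psi(D_i)\simeq\mathfrak{a}_{n-1}$ and apply the special-edge decomposition of \cref{sec:oriented} with $e^{\ast}=D_i$, so that every boundary set $\partial_{D_j}(D_i)$ coincides with a class $\hat\Psi(T_j)\subset\Psi(D_i)$. The concentration task then reduces to the collection of $n$ MSFs
\[\bigl(S=V(\mathfrak{a}_{n-1}),\;F=\hat\Psi(T_j),\;\sigma_j\bigr),\quad D_j\neq D_i,\]
with per-sink demand $\sigma_j=|\Psi(D_i)|\,|\Psi(D_j)|/|\hat\Psi(T_j)|$. For pairs at bounded distance $d(D_i,D_j)=O(1)$ this satisfies $\sigma_j\leq|V(\mathfrak{a}_{n-1})|$, so \cref{lem:logtrick} (in its reversed form) directly provides aggregate congestion $O(\sqrt n)$ inside $\Psi(D_i)$.

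The hard part will be the large-distance pairs: the ratio $|\Psi(D_i)|/|\hat\Psi(T_j)|=\Theta(d^{3/2})$ makes $\sigma_j$ exceed the hypothesis of \cref{lem:logtrick} by a factor of $d^{3/2}$, and a naive per-pair application of \cref{lem:logtrick} or \cref{lem:singlemsf} would push the bound up to $\Omega(n^{3/2})$ or worse. I plan to resolve this by following the technique of \cite[Lemma~41]{eppstein2022improvedfull}: for each large-$d$ pair I split the corresponding MSF into several parallel components, each routing its share through a carefully chosen intermediate central diagonal, so that every sub-MSF has per-sink demand at most $|V(\mathfrak{a}_{n-1})|$ and is therefore covered by \cref{lem:logtrick}. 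The extra matching-edge load created by this rerouting is absorbed by the transmission budget already computed above. The distribution stage inside any target class $\Psi(D_j)$ is entirely symmetric to concentration and will be handled in the same way.

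Finally, since internal edges of distinct classes are disjoint, the concentration and distribution bounds combine to $O(\sqrt n)$ on each internal edge, while the transmission bound gives $O(\sqrt n)$ on each matching edge. Summing all contributions across the $n+1$ MSFs therefore yields the claimed total congestion $O(\sqrt n)$.
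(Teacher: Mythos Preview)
Your transmission estimate is correct, and you correctly identify that the concentration/distribution step is the bottleneck, with the per-sink demand $\sigma_j=\Theta\bigl(d(D_i,D_j)^{3/2}\bigr)\,|V(\mathfrak a_{n-1})|$ exceeding the hypothesis of \cref{lem:logtrick} for all but $O(1)$ values of~$j$. But the heart of the proof is precisely the large-$d$ case, and here your proposal defers to \cite[Lemma~41]{eppstein2022improvedfull} without carrying the argument out. The description ``split the corresponding MSF into several parallel components, each routing its share through a carefully chosen intermediate central diagonal'' does not specify which intermediates, how the splitting is organized, or how transit traffic inside the intermediate classes $\Psi(D_k)$ is controlled. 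In particular, the sentence ``the extra matching-edge load created by this rerouting is absorbed by the transmission budget already computed above'' is not justified: your transmission budget was computed under the assumption that an edge of $\mathcal E(D_i,D_j)$ carries only the flow between $\Psi(D_i)$ and $\Psi(D_j)$, whereas rerouting pushes flow from many other pairs through the same edge, and no accounting of that extra load is offered.

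The paper's proof is exactly the missing execution. It groups the classes $\Psi(D_i)$ dyadically into contiguous blocks of sizes $2,4,8,\ldots$; at level~$k$, every pair $\Psi(D_i),\Psi(D_j)$ lying in the same $2^k$-block exchanges a $1/2^k$ fraction of its outbound flow directly across $\mathcal E(D_i,D_j)$. Since $|i-j|\le 2^k$ within a block, the matching-edge congestion at level~$k$ is $O(|i-j|^{3/2}/2^k)=O(\sqrt{2^k})$, and summing the geometric series over the $\log n$ levels gives $O(\sqrt n)$. Crucially, at each level the incoming flow in any class $\Psi(D_i)$ is aggregated into a \emph{single} MSF whose surplus is uniform over each boundary $\partial_{D_j}(D_i)$, so \cref{lem:singlemsf} (congestion $\le 1$ per level), not \cref{lem:logtrick}, handles the internal distribution. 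Your route via \cref{lem:logtrick} could in principle be completed, but only after supplying an organization equivalent to this hierarchy; as written, the proposal stops short of the step that actually produces the $O(\sqrt n)$ bound.
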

\begin{proof}
    We follow the hierarchical grouping used in the proof of~\cite[Lemma 41]{eppstein2022improvedfull}: group the classes $\{\Psi(D_i)\}$ hierarchically into groups of 2, 4, 8, and so on, where each $k$-level group is a contiguous (with respect to the positions of the diagonals' endpoints) block $\{\Psi(D_{l+1}), \Psi(D_{l+2}), \dots, \Psi(D_{l+2^k})\}$ for some~$l$.

    At each level of the grouping, each pair of classes~$\Psi(D_i), \Psi(D_j)$ exchanges
    $$\frac{\sigma|\Psi(D_i)||\Psi(D_j)|}{2^k|\Psi(D_j)|}$$ flow across the matching $\mathcal{E}(D_i, D_j)$ since
    there are~$2^k$ classes in the group, all of equal size~$|\Psi(D_j)|$, each of which must receive its (equal) share of the $\sigma|\Psi(D_i)|$ flow sent out of~$\Psi(D_i)$. This produces congestion
    $$\frac{\sigma|\Psi(D_i)||\Psi(D_j)|}{|V(\mathfrak{b}_n)|2^k|\Psi(D_j)||\mathcal{E}(D_i, D_j)|} = \frac{|V(\mathfrak{b}_n)||\Psi(D_i)|}{|V(\mathfrak{b}_n)|2^k|\mathcal{E}(D_i, D_j)|} \leq \frac{O(|j-i|^{3/2})}{2^k} = O(\sqrt{2^k})$$
    across each edge in the matching $\mathcal{E}(D_i, D_j)$, where we have used the fact that~$D_i$ and~$D_j$ are in the same group. Furthermore, at the $k$-th grouping level, we have a single induced MSF within each~$\Psi(D_i)$ in which the surplus is bounded by~$\sqrt{2^k}|V(\mathfrak{b}_n)|$ and is uniform over each boundary set~$\partial_{D_j}(D_i)$. Treating each~$\partial_{D_j}(D_i)$ as a class $\Psi(T_j)$ as in \cref{sec:steps} permits us to apply \cref{lem:singlemsf}, allowing us to distribute the incoming flow throughout each~$\Psi(D_i)$ while producing congestion at most~$\sqrt{2^k}$. Summing over the~$k$ levels gives the~$O(\sqrt n)$ bound claimed.
\end{proof}

In light of the preceding lemmas, it suffices to perform the shuffling step as in \cref{sec:steps} and then to replace the concentration, transmission, and distribution steps with the flow constructed in the proof of \cref{lem:logtricktoplvl}.

\subsection{Mixing bound for~$\mathfrak{b}_n$}
Applying \eqref{eq:expmixing} to the~$O(\sqrt n \log n)$ expansion bound gives a mixing time of~$O(n^4\log^2n)$, proving \cref{thm:typeB_lower}. We now improve this to~$O(n^3 \log^3n)$. 

Lov\'{a}sz and Kannan gave a stronger version of \eqref{eq:expmixing} when one can show that small sets have large expansion. Given a graph~$G = (V, E)$, one can define the quantity
$$h(x) = \min_{S \subseteq V: |S|/|V| \leq x} |\partial S|/|S|.$$ Then the natural random walk on~$G$ has mixing time at most
\begin{equation}
\label{eq:avgcond}
    \tau \leq 32\int_{1/|V|}^{1/2} \frac{\Delta^2}{x(h(x))^2}dx.
\end{equation} 

Eppstein and Frishberg~\cite[Corollary 44]{eppstein2022improvedfull} showed how to partition the vertices of~$\mathfrak{a}_n$ into small sets with large expansion and apply \eqref{eq:avgcond}, obtaining the estimate 
\[    \int_{1/|V|}^{1/2} \frac{\Delta^2}{x(h(x))^2}dx = O(n^3 \log^3 n). 
\]
We retrace the main points of their analysis, beginning with the following lemma. 
\begin{lemma}[\cite{eppstein2022improvedfull}, Lemma~43]
\label{lem:efsmallpart}
For every integer $1 \leq k \leq n$, every triangulation~$t \in V(\mathfrak{a}_n)$ lies in the vertex set of some Cartesian product of associahedron graphs $\mathfrak{a}_{i_1}, \mathfrak{a}_{i_2}, \dots, \mathfrak{a}_{i_k}$, where $i_j \leq \frac{n}{2^{\lfloor \log_3 k\rfloor }}$ for all~$j$.
\end{lemma}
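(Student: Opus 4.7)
The plan is to build the required Cartesian-product factorization by recursively subdividing the polygon using diagonals that are already in $t$. At each recursive level I will split every existing sub-polygon into three smaller ones by singling out two of its diagonals from $t$; after $\ell = \lfloor \log_3 k \rfloor$ levels there are at most $3^{\ell} \leq k$ pieces, and I pad with trivial $\mathfrak{a}_0$ factors (single vertices) if fewer than $k$ pieces result. Because every diagonal used in the subdivisions belongs to $t$, the triangulation $t$ corresponds to a vertex of the resulting product of associahedra.

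The heart of the argument is the following single-step claim: given any triangulation $T$ of an $m$-gon $P$, there exist two diagonals of $T$ whose removal splits $P$ into three sub-polygons each with at most roughly $m/2$ sides. To prove it, I pass to the dual tree $D$ of $T$, whose vertices are the $m-2$ triangles of $T$ (with edges corresponding to internal diagonals); $D$ has maximum degree $3$. A centroid vertex $v \in D$ has the property that every component of $D-v$ has at most $(m-2)/2$ vertices. When $\deg v = 3$, I delete the two edges of $D$ joining $v$ to its two largest subtrees; the remaining component (the smallest subtree together with $v$) has size at most $\lfloor (m-2)/3 \rfloor + 1 \leq (m-2)/2$ once $m \geq 6$. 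The degree-$2$ and degree-$1$ cases reduce to the same argument or to trivial base cases. Each resulting dual subtree of size $s$ corresponds to a sub-polygon with $s+2$ sides, so every piece has at most $\lceil m/2 \rceil + 1$ sides.

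Iterating the single-step claim $\ell$ times on the recursion tree of sub-polygons produces pieces each with at most $n/2^{\ell} + O(1)$ diagonals; plugging in $\ell = \lfloor \log_3 k \rfloor$ yields the stated bound on each $i_j$ up to a harmless additive constant. To reach exactly $k$ factors I fill the remaining slots with copies of $\mathfrak{a}_0$, which does not change the product as a graph and does not affect the size bound.

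The main obstacle will be the bookkeeping: the single-step bound $\lceil m/2 \rceil + 1$ accumulates an additive $O(\log k)$ term over the $\ell$ levels, so to match the clean bound $i_j \leq n/2^{\lfloor \log_3 k \rfloor}$ exactly I will need either a slightly sharper split at each level (using the degree-$3$ centroid case, where the three subtrees actually divide the mass in a $1/3$--to--$1/2$ range rather than only being bounded above by $1/2$) or explicit handling of a few small base cases that absorb the slack. The structural content, however, is precisely the tree-centroid splitting lemma described above.
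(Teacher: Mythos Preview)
Your approach is essentially the same as the paper's. The paper phrases the single-step split in terms of the \emph{central triangle} of a regular polygon (the unique triangle of the triangulation that contains the polygon's geometric center), whereas you phrase it in terms of a centroid of the dual tree; but a chord of a regular $m$-gon that leaves the center on the far side cuts off at most $\lfloor m/2\rfloor$ vertices, so the central triangle \emph{is} a centroid of the dual tree, and the two constructions coincide up to the choice of centroid. Both versions then recurse $\lfloor\log_3 k\rfloor$ times, pad with trivial factors, and face the same additive bookkeeping you flagged, which the paper's sketch likewise elides (it also ends up with a bound of the form $1+\frac{n-2}{2^{\lfloor\log_3 k\rfloor}}$ on the polygon size rather than exactly $n/2^{\lfloor\log_3 k\rfloor}$). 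One incidental advantage of the central-triangle formulation is that it is canonical across triangulations sharing that triangle, so the resulting classes $\psi(T)$ partition $V(\mathfrak a_n)$; your centroid choice depends on $t$, which is fine for the lemma as stated but would need a tie-breaking rule if you later want the products to form a partition.
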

\cref{lem:efsmallpart} states that we can partition $V(\mathfrak{a}_n)$ hierarchically into a disjoint union of subsets, where each subset is the vertex set of a Cartesian product of associahedron graphs $\mathfrak{a}_{i_1}, \mathfrak{a}_{i_2}, \cdots, \mathfrak{a}_{i_k}$ with $i_j \leq \frac{n}{2^{\lfloor \log_3 k\rfloor }}$ for all $j$. Here, $k$ is an arbitrary parameter indicating how ``fine'' the partition is. A greater value of~$k$ means a greater number of (smaller) factor graphs in each Cartesian product.   

We sketch the hierarchical partitioning here. We need the following definition from \cite{eppstein2022improved}: a \dfn{central triangle} of the regular $(n+2)$-gon $P_{n+2}$ is a triangle $T$ (whose edges are edges and diagonals of $P_{n+2}$) such that $T$ contains the center of $P_{n+2}$. 

\begin{proof}[Proof sketch for \cref{lem:efsmallpart}]
For simplicity, assume~$n$ is odd. (This assumption does not affect the asymptotics.) Fix a parameter~$k$, and construct a partition of $V(\mathfrak a_n)$ as follows. If $k=1$, the partition is trivial and consists of the entire vertex set~$V(\mathfrak{a}_n)$. Otherwise, partition the triangulations in $V(\mathfrak{a}_n)$ into classes $\{\psi(T) : T \in \mathcal{T}\}$, where~$\mathcal{T}$ is the collection of all central triangles of $P_{n+2}$ and a triangulation~$t$ is in the class~$\psi(T)$ if and only if $t$ includes the central triangle~$T$. Observe that for each~$\psi(T)$, the triangle $T$ partitions the $(n+2)$-gon into three smaller polygons each with at most~$\frac{n}{2}$ vertices (one of these polygons may be degenerate and have only two sides). In arbitrary order, for each of these smaller polygons, recursively partition the triangulations in~$\psi(T)$ so that each triangulation~$t\in \psi(T)$ belongs to a class induced by the central triangle~$T'$ in the smaller polygon that~$t$ uses. Each such class is now a Cartesian product of up to $9 = 3^2$ smaller associahedron graphs, each defined over an $(i+1)$-gon with $i \leq \frac{n-2}{2^2} = \frac{n-2}{2}$.

Continue this process recursively until every triangulation is assigned to a Cartesian product of at least~$k$ smaller associahedron graphs. Once the process is stopped, each such product consists of smaller associahedron graphs defined over polygons with at most $1 + \max\{1, \frac{n-2}{2^{\lfloor \log_3 k\rfloor}}\}$ sides.
\end{proof}

Eppstein and Frishberg then applied the following lemma due to Chung and Tetali. 
\begin{lemma}[\cite{isocart}]
    \label{lem:gtexp}
    The expansion $h(G)$ of a Cartesian product~$G$ of graphs~$G_1, G_2, \dots, G_k$ is at least
    $$\frac{1}{2}\min_{1 \leq i \leq k} h(G_i).$$
\end{lemma}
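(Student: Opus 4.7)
The plan is to prove the bound directly for general $k$, since a naive induction on $k$ via the binary case $G_1 \times G_2$ would compound the factor of $\tfrac12$ at each step and yield only $2^{-(k-1)}\min_i h(G_i)$ rather than the claimed $\tfrac12 \min_i h(G_i)$. So the argument must treat all $k$ factors simultaneously.

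Write $G$ as a Cartesian product with vertex set $V(G) = V(G_1) \times \cdots \times V(G_k)$, where an edge joins two vertices differing in exactly one coordinate via an edge of the corresponding factor. Fix $S \subseteq V(G)$ with $|S| \leq |V(G)|/2$, and decompose the boundary by direction: $\partial S = \bigsqcup_{i=1}^k \partial_i S$, where $\partial_i S$ consists of the boundary edges whose endpoints differ only in coordinate $i$. For each direction $i$, the vertex set $V(G)$ partitions into fibers, each of which induces a copy of $G_i$. Applying the definition of $h(G_i)$ to the cut of $S$ inside each fiber $F$ and then summing yields
\[
|\partial_i S| \;\geq\; h(G_i) \sum_{F} \min\!\bigl(|S \cap F|,\, |F \setminus S|\bigr).
\]
Writing $h_* = \min_i h(G_i)$ and summing over $i$, it suffices to establish the purely combinatorial inequality
\[
\sum_{i=1}^k \sum_{F \text{ a fiber in direction } i} \min\!\bigl(|S \cap F|,\, |F \setminus S|\bigr) \;\geq\; \tfrac12 |S|.
\]

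To prove this inequality, I would use dual counting. The double sum above counts pairs $(v,i)$ in which $v$ lies on the \emph{minority} side of its fiber in direction $i$, meaning either $v \in S$ with $|S \cap F_i(v)| \leq |F_i(v)|/2$, or $v \notin S$ with the reverse inequality. Each $v \in S$ that lies on the minority side in at least one direction contributes at least one unit. For the remaining ``crowded'' $v \in S$\textemdash those on the majority side in every direction\textemdash every fiber through $v$ is more than half full of $S$, and this crowdedness forces the neighboring non-$S$ vertices to themselves occupy minority sides, contributing units to the sum on behalf of the crowded $v$'s. The global hypothesis $|S| \leq |V(G)|/2$ is then used to show that the contributions of non-$S$ vertices suffice to cover at least $\tfrac12|S|$ in total.

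The main obstacle is establishing this combinatorial inequality with the tight constant $\tfrac12$; the fiber-wise decomposition of $\partial S$ is routine, but obtaining the exact constant requires a genuinely global accounting (likely a careful exchange of summations exploiting the symmetry between minority $S$-vertices and minority $V\setminus S$-vertices, or alternatively a compression/shifting argument in the spirit of classical isoperimetric proofs) rather than an inductive reduction. I would expect the cleanest route to be a direct double-counting argument that associates each crowded $v \in S$ to a set of complementary non-$S$ vertices whose cardinality is controlled by $|V(G)| - |S| \geq |V(G)|/2$.
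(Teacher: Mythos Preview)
The paper does not prove this lemma; it is quoted verbatim from Chung and Tetali \cite{isocart} and used as a black box. So there is no in-paper argument against which to compare your attempt, and the question is simply whether your proposal stands on its own.

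Your fiber-wise decomposition is correct: writing $\partial S=\bigsqcup_i\partial_i S$ and applying the definition of $h(G_i)$ inside each fiber does reduce the lemma to the combinatorial inequality
\[
\sum_{i=1}^{k}\ \sum_{F\text{ a fiber in direction }i}\min\bigl(|S\cap F|,\,|F\setminus S|\bigr)\ \ge\ \tfrac12\,|S|
\qquad\text{whenever }|S|\le |V|/2.
\]
But this is exactly the point at which your proposal stops being a proof. You yourself call it ``the main obstacle,'' and what follows is only a heuristic: you observe that the double sum counts minority-side vertices, you introduce ``crowded'' $v\in S$ that sit on the majority side in every direction, and you assert that neighbouring non-$S$ vertices then contribute on their behalf. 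No map from crowded vertices to compensating contributions is specified, no counting is carried out, and the only appeal to the global hypothesis $|S|\le|V|/2$ is the sentence ``is then used to show that the contributions of non-$S$ vertices suffice.'' That is not an argument; it is a statement of what still needs to be proved. As written, the proposal is a sound reduction followed by an unproven lemma, so it has a genuine gap at the one nontrivial step.

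If you want to complete it along these lines you will need either an explicit charging scheme (with a verified inequality showing that each crowded $v$ is covered with total weight at least $\tfrac12$) or an entirely different device---for instance an analytic comparison of $\min(x,1-x)$ with $x(1-x)$ on the fiber densities, or the inductive bookkeeping in the Chung--Tetali paper itself. The naive ``every crowded $v$ has a non-$S$ neighbour in some direction'' observation is not enough by itself to produce the constant $\tfrac12$ uniformly in $k$.
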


\cref{lem:gtexp} implies the following corollary of \cref{lem:efsmallpart}. 
\begin{corollary}
\label{cor:smallsetsbigexp}
    Let $k\in[1,n]$. If $S \subseteq V(\mathfrak{a}_n)$ is such that $|S| \leq (C_{n/k})^k/2$, then 
    $$\frac{|\partial S|}{|S|} \geq h\left(\mathfrak{a}_{\frac{n}{2^{\lfloor \log_3 k \rfloor}}}\right) = \Omega\left(\frac{1}{\left(\frac{n}{2^{\lfloor \log_3 k \rfloor}}\right)^{3/2}\log \frac{n}{2^{\lfloor \log_3 k\rfloor}}}\right).$$
    \end{corollary}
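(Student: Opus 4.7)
The plan is to combine the hierarchical partition from \cref{lem:efsmallpart} with the Chung--Tetali Cartesian-product expansion bound (\cref{lem:gtexp}) and the Eppstein--Frishberg estimate $h(\mathfrak{a}_m)=\Omega(1/(m^{3/2}\log m))$. First I would apply \cref{lem:efsmallpart} with the given parameter $k$ to partition $V(\mathfrak{a}_n)=\bigsqcup_j V_j$, where each $V_j$ is the vertex set of a Cartesian product $G_j=\mathfrak{a}_{i_1^j}\times\cdots\times\mathfrak{a}_{i_k^j}$ with every $i_\ell^j\le n/2^{\lfloor\log_3 k\rfloor}$. A crucial observation is that the edges of $\mathfrak{a}_n$ whose endpoints both lie in $V_j$ are exactly the edges of $G_j$: the recursively chosen central triangles must be preserved by any flip that keeps both endpoints in $V_j$, so the flip is internal to a single final sub-polygon and therefore corresponds to a move in exactly one Cartesian factor. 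Consequently $|\partial S|\ge\sum_j|\partial_{G_j}(S\cap V_j)|$, reducing the problem to bounding expansion inside each piece.

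The main technical step is the lower bound $|V_j|\ge (C_{n/k})^k$, up to absorbable constants. Since $|V_j|=\prod_{\ell=1}^k C_{i_\ell^j+1}$ and the indices satisfy $\sum_\ell i_\ell^j=n-O(k)$ (a bounded loss per recursion level from central-triangle vertices being shared between sibling sub-polygons), this reduces to minimizing $\prod_{\ell=1}^k C_{i_\ell+1}$ subject to a fixed-sum constraint and the upper bound $i_\ell\le n/2^{\lfloor\log_3 k\rfloor}$. From $C_{m+1}/C_m=(4m+2)/(m+2)$, which is monotonically increasing in $m$, the sequence $m\mapsto\log C_{m+1}$ is discretely convex, so Jensen's inequality forces the minimum to occur at the balanced point $i_\ell\approx n/k$; this point is feasible because $n/k\le n/2^{\lfloor\log_3 k\rfloor}$ whenever $k\ge 2^{\lfloor\log_3 k\rfloor}$, which always holds. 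The main obstacle will be verifying that the constant-size additive slack in the balanced index does not degrade the bound by more than a multiplicative constant.

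Granting this size bound, the hypothesis $|S|\le (C_{n/k})^k/2$ implies $|S\cap V_j|\le|S|\le|V_j|/2$ for every $j$. Applying \cref{lem:gtexp} to each $G_j$, together with the observation that the Eppstein--Frishberg lower bound $\Omega(1/(m^{3/2}\log m))$ is decreasing in $m$ and that every factor has index at most $n/2^{\lfloor\log_3 k\rfloor}$, yields
\[
h(G_j)\ge\tfrac{1}{2}\min_{\ell}h(\mathfrak{a}_{i_\ell^j})=\Omega\!\left(h\!\left(\mathfrak{a}_{n/2^{\lfloor\log_3 k\rfloor}}\right)\right).
\]
Using $|S\cap V_j|\le|V_j|/2$ gives $|\partial_{G_j}(S\cap V_j)|\ge h(G_j)|S\cap V_j|$, and summing over $j$ together with $|\partial S|\ge\sum_j|\partial_{G_j}(S\cap V_j)|$ produces
\[
\frac{|\partial S|}{|S|}\ge\frac{\sum_j h(G_j)\,|S\cap V_j|}{|S|}=\Omega\!\left(h\!\left(\mathfrak{a}_{n/2^{\lfloor\log_3 k\rfloor}}\right)\right).
\]
Substituting the Eppstein--Frishberg estimate for the right-hand side yields the claimed asymptotic.
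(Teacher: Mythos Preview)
Your approach matches the paper's, which offers nothing beyond the sentence that the corollary follows from \cref{lem:efsmallpart} and \cref{lem:gtexp}; your outline supplies exactly that combination (partition into Cartesian-product pieces, bound each piece's expansion via Chung--Tetali and \cref{lem:eppfrishtri}, and use the size hypothesis to guarantee $|S\cap V_j|\le |V_j|/2$).

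The obstacle you flag is genuine, and the hoped-for resolution (only a bounded multiplicative loss) does not hold. The recursive central-triangle decomposition fixes $\Theta(k)$ diagonals, so $\sum_\ell i_\ell = n - \Theta(k)$ and the balanced Catalan index is $n/k - c$ for some constant $c>0$; since $C_{n/k-c}/C_{n/k}\to 4^{-c}$, taking the $k$-th power costs a factor $2^{\Theta(k)}$, and the convexity argument only delivers $|V_j|\ge (C_{n/k})^k/2^{\Theta(k)}$ rather than $(C_{n/k})^k$. The paper's one-line derivation carries the same imprecision in the literal threshold. It is harmless for the intended application: the threshold enters only through the integration endpoints in \eqref{eq:subcorexp}, and replacing $(C_{n/k})^k$ by $(C_{n/k})^k/2^{\Theta(k)}$ perturbs $\ln\frac{(C_{n/k})^k}{(C_{n/(k+1)})^{k+1}}$ by $O(1)$, which is absorbed in \eqref{eq:n32ratio}.
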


Eppstein and Frishberg then obtained the desired bound by plugging \cref{cor:smallsetsbigexp} into \eqref{eq:avgcond} and applying a series of inequalities, which we include for completeness: 

\begin{align}
    \tau &\leq 32 \int_\frac{1}{C_n}^{1/2} \frac{\Delta^2}{(h(x))^2} dx \nonumber \\
    &= O(1)\sum_{k=1}^{n-1} \int_{\left(C_{\lfloor n/(k+1)\rfloor }\right)^{k+1}/C_n}^{\left(C_{\lfloor n/k\rfloor }\right)^k/C_n} \left(\frac{n}{2^{\lfloor \log_3 k\rfloor }}\right)^3 \log^2 \left(\frac{n}{2^{\lfloor \log_3 k\rfloor}}\right)\frac{dx}{x}\label{eq:subcorexp} \\
    &= O(n^3 \log^2 n)\sum_{k=1}^{n-1} \left(\frac{1}{2^{\lfloor \log_3 k\rfloor}}\right)^3 \int_{\left(C_{\lfloor n/(k+1)\rfloor}\right)^{k+1}/C_n}^{\left(C_{\lfloor n/k\rfloor}\right)^k/C_n}\frac{dx}{x} \label{eq:pulloutmainfacs} \\
    &= O(n^3 \log^2 n)  \sum_{k=1}^{n-1}\left(\frac{1}{2^{\lfloor \log_3 k\rfloor}}\right)^3\ln \frac{\left(C_{n/k}\right)^k}{\left(C_{n/(k+1)}\right)^{k+1}} \nonumber \\
    &= O(n^3 \log^2 n)  \sum_{k=1}^{n-1}\left(\frac{1}{2^{\lfloor \log_3 k\rfloor}}\right)^3\ln O(n^{3/2}) \label{eq:n32ratio} \\
    &= O(n^3 \log^3 n) \sum_{k=1}^{n-1}\left(\frac{1}{2^{\lfloor \log_3 k\rfloor}}\right)^3 \nonumber \\
    &= O(n^3 \log^3 n). \nonumber
\end{align}
In these computations, \eqref{eq:subcorexp} comes from applying \cref{cor:smallsetsbigexp} and breaking the integral into a sum of integrals, \eqref{eq:pulloutmainfacs} comes from pulling out the factors that do not depend on~$k$, and \eqref{eq:n32ratio} comes from the asymptotic behavior of Catalan numbers. 

Finally, we observe that this analysis carries through to~$\mathfrak{b}_n$

\begin{lemma}
    \label{lem:shaven}
    The $\Omega(\frac{1}{\sqrt n \log n})$ expansion bound in \cref{thm:typeB_lower} gives mixing time $O(n^3 \log^3 n)$.
\end{lemma}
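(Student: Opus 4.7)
The plan is to apply the Lov\'asz--Kannan average conductance bound \eqref{eq:avgcond} after reducing small-set expansion in $\mathfrak{b}_n$ to small-set expansion in $\mathfrak{a}_{n-1}$. Eppstein and Frishberg have already bounded the analogous integral for the type-$A$ associahedron by $O(n^3\log^3 n)$, and I will use their estimate as a black box rather than redoing the hierarchical partition argument from scratch for $\mathfrak{b}_n$.

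The key step is the comparison: for every $x\in\bigl[1/|V(\mathfrak{b}_n)|,\,1/(2(n+1))\bigr]$,
\begin{equation*}
h_{\mathfrak{b}_n}(x)\;\geq\;h_{\mathfrak{a}_{n-1}}\bigl((n+1)x\bigr).
\end{equation*}
To prove this, I would use the partition $V(\mathfrak{b}_n)=\bigsqcup_{D}\Psi(D)$ over the $n+1$ central diagonals $D$ of $P_{2n+2}$, together with \cref{remark:typeB}, which identifies each class $\Psi(D)$ with $\mathfrak{a}_{n-1}$ and forces $|\Psi(D)|=|V(\mathfrak{b}_n)|/(n+1)$. Given $S\subseteq V(\mathfrak{b}_n)$ with $|S|/|V(\mathfrak{b}_n)|\leq x$, set $S_D=S\cap\Psi(D)$ and $x_D=|S_D|/|\Psi(D)|$. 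Then $x_D\leq (n+1)x\leq 1/2$, so $S_D$ is admissible in the minimum defining $h_{\mathfrak{a}_{n-1}}((n+1)x)$, and every edge of $\Psi(D)$ between $S_D$ and its complement is also a boundary edge of $S$ in $\mathfrak{b}_n$. Hence
\begin{equation*}
|\partial S|\;\geq\;\sum_D|\partial_{\Psi(D)}(S_D)|\;\geq\;\sum_D h_{\mathfrak{a}_{n-1}}(x_D)\,|S_D|\;\geq\;h_{\mathfrak{a}_{n-1}}\bigl((n+1)x\bigr)\,|S|,
\end{equation*}
where the last inequality uses that $h_{\mathfrak{a}_{n-1}}$ is non-increasing.

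With the comparison in hand, I would split the integral in \eqref{eq:avgcond} at $x_0=1/(2(n+1))$. On $[x_0,1/2]$, the crude bound $h_{\mathfrak{b}_n}(x)=\Omega\bigl(1/(\sqrt{n}\log n)\bigr)$ from \cref{thm:typeB_lower} together with $\Delta=n$ gives an integrand of order $n^3\log^2 n/x$, which integrates to $O(n^3\log^3 n)$. On $[1/|V(\mathfrak{b}_n)|,x_0]$, the substitution $y=(n+1)x$ cancels the $(n+1)$ factors between $dy$ and $dx/x$ and moves the lower endpoint to $(n+1)/|V(\mathfrak{b}_n)|=1/C_n=1/|V(\mathfrak{a}_{n-1})|$, yielding exactly the Lov\'asz--Kannan integral for $\mathfrak{a}_{n-1}$ up to a harmless constant from the ratio $n^2/(n-1)^2$. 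Eppstein and Frishberg's evaluation of that integral gives another $O(n^3\log^3 n)$, and summing the two pieces proves the lemma.

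The main obstacle is the small-set comparison itself. The natural worry is that $S$ might concentrate heavily inside a single class $\Psi(D)$, making $x_D$ too large for the $\mathfrak{a}_{n-1}$ small-set expansion bound to apply. This is precisely why the threshold $x\leq 1/(2(n+1))$ is required: because the classes have equal size, we automatically get $x_D\leq (n+1)x\leq 1/2$ regardless of how $S$ distributes across classes. Cross-class edges in $\mathcal{E}(D,D')$ play no role in the argument, since discarding them can only weaken the bound on $|\partial S|$, which is harmless here.
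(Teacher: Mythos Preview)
Your argument is correct and takes a slightly different route from the paper's. The paper's proof sketch rebuilds the full hierarchical partition of \cref{lem:efsmallpart} for $\mathfrak{b}_n$: first by central diagonal, then recursively by central triangle within each half, and then retraces the chain of inequalities \eqref{eq:subcorexp}--\eqref{eq:n32ratio} for the Lov\'asz--Kannan integral from scratch in the type-$B$ setting.

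You instead stop after the single central-diagonal level and package all of the remaining work into the comparison $h_{\mathfrak{b}_n}(x)\geq h_{\mathfrak{a}_{n-1}}((n+1)x)$, which lets you invoke the type-$A$ integral estimate as a black box via the substitution $y=(n+1)x$. The identity $(n+1)/|V(\mathfrak{b}_n)|=1/C_n=1/|V(\mathfrak{a}_{n-1})|$ makes the endpoints line up exactly, so nothing is lost in the change of variables, and the tail $[1/(2(n+1)),1/2]$ is handled cleanly by the global expansion bound. Both proofs rest on the same first step (the central-diagonal partition into copies of $\mathfrak{a}_{n-1}$), but yours is more modular: it isolates the inheritance of small-set expansion under a balanced partition as a standalone lemma and avoids reproducing the recursive central-triangle decomposition and the integral calculus. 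The paper's version, by contrast, makes the analogy with \cref{lem:efsmallpart} explicit at every level and would generalize more readily if one wanted finer control over the profile $h_{\mathfrak{b}_n}(x)$ itself rather than just the integral.
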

\begin{proof}
    First, partition the triangulations of~$\mathfrak{b}_n$ according to the central diagonal as we have done elsewhere in this paper. Within each class, partition according to the central triangle on either of the two (symmetric) sides of the polygon. Repeat this process recursively. The proof follows from retracing the analysis in this section with the central-triangle partition of~\cite{eppstein2022improvedfull}. All of the inequalities hold as in~\cite{eppstein2022improvedfull}. 
\end{proof}
\end{document}